\tikzset{
  symbol/.style={
    draw=none,
    every to/.append style={
      edge node={node [sloped, allow upside down, auto=false]{$#1$}}}
  }
}
\newif\if@check@engine  \@check@enginetrue 
\newtheorem{theor}{\hspace{1cm}{\sc Theorem}}[section]
\newtheorem{utver}[theor]{\hspace{1cm}{\sc Proposition}}
\newtheorem{lemma}[theor]{\hspace{1cm}{\sc Lemma}}
\newtheorem*{utver*}{\hspace{1cm}{\sc Proposition}}
\theoremstyle{definition}
\newtheorem{defin}[theor]{\hspace{1cm}{\sc Definition}}
\newtheorem{exa}[theor]{\hspace{1cm}{\sc Example}}
\newtheorem{rem}[theor]{\hspace{1cm}{\sc Remark}}
\newtheorem{quest}[theor]{\hspace{1cm}{\sc Question}}
\newcommand{\Vol}{\mathop{\rm Vol}\nolimits}
\newcommand{\id}{\mathop{\rm id}\nolimits}
\newcommand{\rk}{\mathop{\rm rk}\nolimits}
\newcommand{\vol}{\mathop{\rm Vol}\nolimits}
\newcommand{\conv}{\mathop{\rm conv}\nolimits}
\newcommand{\Trop}{\mathop{\rm Trop}\nolimits}
\newcounter{idx}
\newcommand{\rotraise}[1]{
  \StrLen{#1}[\slen]
  \forloop[-1]{idx}{\slen}{\value{idx}>0}{
    \StrChar{#1}{\value{idx}}[\crtLetter]
    \IfSubStr{tlQWERTZUIOPLKJHGFDSAYXCVBNM}{\crtLetter}
      {\raisebox{\depth}{\rotatebox{180}{\crtLetter}}}
      {\raisebox{1ex}{\rotatebox{180}{\crtLetter}}}}
}
\newcommand{\bigslant}[2]{{\raisebox{.2em}{$#1$}\left/\raisebox{-.2em}{$#2$}\right.}}
\newcommand{\area}{\mathop{\rm Area}\nolimits}
\renewcommand{\emph}[1]{{\it {\color{NavyBlue} #1}}}
\def\R{\mathbb R}
\def\N{\mathbb N}
\def\Z{\mathbb Z}
\def\C{\mathbb C}
\def\CC{({\mathbb C}^\star)}
\def\T{\mathbb T}
\begin{document}

\begin{center}{\Large \sc Basecondary polytopes}

\vspace{3ex}

{\sc Alexander Esterov, Arina Voorhaar}
\end{center}

\vspace{3ex}

\section{Introduction}

Many (if not most) of convex polytopes, important for combinatorial and algebraic geometry, are closely related to secondary polytopes of point configurations, or base polytopes of submodular functions, or their numerous variations and generalizations.

The aim of this text is to introduce the class of {\it basecondary polytopes}. This class includes (and allows to study uniformly) the aforementioned ones, as well as some others, e.g. appearing as Newton polytopes of important discriminant hypersurfaces: see Example \ref{exa0} below. 

Most notably, this includes the discriminant of the Lyashko--Looijenga map, which is important for enumerative geometry of ramified coverings and cannot be reduced (by far) to Gelfand--Kapranov--Zelevinsky's A-discriminants and secondary polytopes.

\subsection{Basecondary functions} The basecondary polytope will be defined by its support function from the following data:

-- A ground set $\overline m:=\{1,\ldots,m\}$;

-- A map $A:\overline m\to\R^n$ (whose image in subsequent algebraic geometry applications plays the role of the support of a general Laurent polynomial);

-- A function $F:2^{\overline m}\to\R\cup\{-\infty\}$ (which will have to be nearly submodular, to assure the existence of the basecondary polytope).

\begin{defin}\label{basec}
The {\it basecondary function} $\mathscr{B}_F$ is a piecewise linear function on the space $\R^{\overline m}$. Its value at a 
point $\lambda:\overline m\to\R$ 
is defined as $$\sum_{\gamma}\gamma(0) \cdot (F\{(\lambda-\gamma)\geq 0\}-F\{(\lambda-\gamma)> 0\}) \cdot \Vol A\{(\lambda-\gamma)=\max\}.$$
Here the sum is taken over all affine linear functions $\gamma:\R^n\to\R$ (of which all but finitely many give zero terms), $\Vol$ is the $n$-dimensional lattice volume of the convex hull, and  $\lambda-\gamma:=\lambda-\gamma\circ A$ refers to a function on $\overline m$ under a small abuse of notation.

Whenever the basecondary function is convex, there is a unique polytope with this support function. It is called the {\it basecondary polytope} and is denoted by $\mathcal B_F$.
\end{defin}
Note that the basecondary function by its definition depends only on the values of the function $F$ on sets of more than $n$ elements.
\begin{exa}\label{exa0}
0. If $n=0$ and $F$ is submodular, then $\mathcal B_F$ (for the unique $A:\overline m\to\R^0$) is the base polytope of $F$, modulo the Minkowski summand $F(\overline{m})\cdot($standard simplex$)$. This essentially dates back to Lov\'{a}sz, see Example \ref{exa: dim0_base_polytope} for details.

\vspace{1ex}

1. If $n=1$, and $-F$ is the indicator function of a point, then $\mathcal B_F$ is the (shifted) secondary polytope of the configuration $A(\overline m)\subset\Z$. As observed by Gelfand, Kapranov and Zelevinsky \cite{GKZ}, it is the Newton polytope of the $A$-discriminant, i.e. the resultant of the polynomials $$\sum_{a\in A(\overline m)} c_a x^a \quad\mbox{ and }\quad \sum_{a\in A(\overline m)} a c_a x^a,$$
regarded as a polynomial of the coefficients $c_a$.

\vspace{1ex}

2. More generally, the Newton polytope of the resultant of 
$$\sum_{a\in A(\overline m)} c_a x^a \quad\mbox{ and }\quad \sum_{a\in A(\overline m)} v_a c_a x^a$$
differs from the $A$-secondary polytope by the Minkowski summand $\mathcal B_F$, where $F(I)$ is the rank of the matrix $\begin{pmatrix}1\\v_a\end{pmatrix},\,{a\in I}$. 
This example covers e.g. the resultant of a polynomial and its higher order derivative. Notice that $F$ is the rank function of the respective rank 2 matroid, hence submodular.

\vspace{1ex} 

3. If $n=2$, $A(\overline m)$ is the set of vertices of a convex $m$-gon, and $-F$ is the indicator function of a point, then $\mathcal B_F$ is a (shifted) $m$-associahedron.

\vspace{1ex}

4. If $n=1$ and $F(I)=-\gcd A(I)$ (an exotic nearly submodular function), then $\mathscr B_F$ is closely related to the discriminant of the {\it Lyashko--Looijenga map}, assigning to a rational function the divisor of its critical values. The singularity strata of this map enumerate topological types of rational functions; in particular, the critical locus parameterizes the set of all rational functions with non-generic topology (i.e. {\it non-Morse functions}, having a degenerate critical point or two critical points with the same value). This map is especially important in the enumerative geometry of ramified coverings (see e.g. \cite{lz}). 

Applying the LL map to Laurent polynomials $\sum_{a\in A(\overline m)} c_a x^a$, its critical locus (i.e. the locus of all non-Morse Laurent polynomials) is given by a polynomial equation in the coefficients $c_a$ ({\it the Morse discriminant}). The Newton polytope of this equation is similar in importance to the Newton polytope of the usual discriminant, fundamentally studied starting from \cite{GKZ}.

The support of the dual fan of this polytope can be found from \cite{jems} (without a fan structure). Furthermore, the support function of this polytope was computed in \cite{V23}. The initial motivation for our work is to gain a combinatorial understanding of this polytope. We show that the support function of this polytope, modulo a Minkowski summand equal to an iterated fiber simplex (as in \cite{bs}), is the basecondary function $\mathscr B_{-\gcd}$ (Theorem \ref{theor:np_of_ms} below). The aim of the text is to prove this theorem and the following one.
\end{exa}
For an $(n+2)$-element set $J\subset\bar m$ such that $A(J)$ is not in an affine hyperplane, let $J_0\subset J$ be the minimal subset such that $A(J_0)$ is affine dependent.
\begin{theor}[Proved in Section \ref{ssproof}]\label{mainth}
1. For any $A$, assume that the function $F$ is submodular for sets of size at least $n$, i.e. $F(A\cap B)+F(A\cup B)\leq F(A)+F(B)$ whenever $|A\cap B|\geq n$. Then the basecondary function $\mathscr B_F$ is convex, upon adding a sufficiently large multiple of the support function of the $A(\overline m)$-secondary polytope. 

2. The basecondary function $\mathscr{B}_F$ is itself convex if moreover
\begin{equation}\label{eq:F_conditions}
\sum_{k\in J_0}F\big(J\setminus k\big)-(|J_0|-1)F\big(J\big)-F\big(\overline{m}\big)\geqslant 0 \mbox{ for all } J\subset\bar m,\,|J|=n+2.
\end{equation}
\end{theor}

\begin{rem}
Definition \ref{basec} admits a further mild generalization, covering examples 1-4 for arbitrary $A$ with $n>1$, permutoassociahedra, and many other important polytopes. We shall study this extension ({\it mixed basecondary polytopes}) later.
\end{rem}

\subsection{A tropical interpretation.} 

We recall the tropical interpretation of 
secondary polytopes, and then offer a similar interpretation for the basecondary function $\mathscr B_{-\gcd}$. 

The tropical hyperfield $\T$ is the set $\R\cup\{-\infty\}$ with operations $x\cdot_\T y:=x+y$ and $x+_\T y=\max(x,y)$ for $x\ne y$ or $[-\infty,x]$ otherwise. In particular, we have $0_\T=-\infty$, and the tropical torus  $\R^n=(\T\setminus\{0_\T\})^n$ is naturally identified with the real part of the Lie algebra of $\CC^n$. In this torus, a tropical Laurent polynomial $\mathfrak f$ of $n$ variables defines the zero locus $\{{\mathfrak x}\in\R^n\,|\, {\mathfrak f}({\mathfrak x})\ni 0_\T\}$ (shortcut to $\{{\mathfrak f}=0_\T\}$), which is a codimension 1 polyhedral complex, see e.g. \cite{maclagansturmfels} or \cite{mikhalkinrau}.

A complex Laurent polynomial $f(x)=\sum_a c_a x^a$ defines a tropical polynomial $\Trop f({\mathfrak x})=\sum_{a\,|\,c_a\ne 0} {\mathfrak x}^a$. The tropical fan of the hypersurface $\{f=0\}\subset\CC^n$ is a codimension 1 polyhedral cone complex $\Trop\{f=0\}:=\{\Trop f=0_\T\}$. Combinatorially, it is the dual fan to the Newton polytope of $f$, and, geometrically, the Hausdorff limit of $($amoeba of $f=0)/t$ as $t\to\infty$.

A tropical correspondence theorem is an observation that a certain question about polynomials has the same answer over a certain field (e.g. $\C$ or $\R$) and the hyperfield $\T$. 

One of the first tropical correspondence theorems can be seen behind a classical Gelfand--Kapranov--Zelevinsky result \cite{GKZ}: the Newton polytope of the $A$-discriminant is a Minkowskii summand of the $A$-secondary polytope $S_A$.

Indeed, the $A$-discriminant is the closure of all 
polynomials $f\in\C^{A(\bar m)}:=\{\sum_{k\in \bar m}c_k x^{A(k)}\}$ having a degenerate root $x\in\CC^n$ (i.e. such that $f(x)=0$ and $df(x)=0$). If it is given by one equation $D_A=0$ (which happens for most of $A$), then its tropical fan is the dual fan to the Newton polytope of $D_A$.

A point ${\mathfrak x}\in\R^n$ is a degenerate root of a tropical polynomial $\mathfrak f$, if the maximal value of its terms at ${\mathfrak x}$ is contributed by more than two terms. Defining the {\it tropical discriminant} as the set of all tropical polynomials in $\R^{A(\bar m)}$ with a degenerate root, we notice that it is a tropical hypersurface, given by the tropical polynomial equation ${\mathfrak D}_A=0$ with unit coefficients and the Newton polytope $S_A$ (the secondary polytope).

Thus, the GKZ theorem implies that the tropical fan of the usual discriminant is contained in the topical discriminant (and actually coincides for $n=1$): $\Trop \{D_A=0\} \subset \{{\mathfrak D}_A=0\}$. Reverting the reasoning, we can actually see this inclusion as a justification of why we define a degenerate root of a tropical polynomial as above.

\begin{rem}\label{mainrem}
One may have an impression that the above notion of the degenerate root of a tropical polynomial is just artificially adjusted to imply this ``correspondence theorem''. Actually, it has a more fundamental motivation (though can be indeed ``predicted'' from this correspondence theorem): defining the tropicalization of a family of complex polynomials $f_t(x)=\sum c_a(t) x^a,\,c_a\in\C(t)$, as $\Trop f:=\sum \deg c_a {\mathfrak x}^a$, a tropical polynomial has a degenerate root if and only if it is the tropicalization of a family of complex polynomials having a degenerate root.
\end{rem}

We now give a similar tropical interpretation to the observation of Example \ref{exa0}.4 about the basecondary function $\mathscr B_{-\gcd}$ and the Morse discriminant. Again, it reduced to a suitable definition a Morse tropical polynomial (for $n=1$).

\begin{defin}\label{deftropmorse}
1. A critical point of a tropical polynomial $\mathfrak f$ is its root, and a critical value is the maximal value of $\mathfrak f$ at this point (the minimal one being $0_\T$ by the definition of a root).

2. The critical point ${\mathfrak x}$ is said to be degenerate, if $\mathfrak f$ has two pairs of terms taking equal values at ${\mathfrak x}$ (the pairs are allowed to overlap but not coincide; note that one such pair of terms is assured by the fact that ${\mathfrak x}$ is critical).

3. A tropical Laurent polynomial is called Morse, if all of its critical points are nondegenerate, and no critical values coincide.

4. The tropical Morse discriminant is the codimension 1 polyhedral complex in $\R^{A(\bar m)}$ that consists of all non-Morse tropical polynomials.
\end{defin}

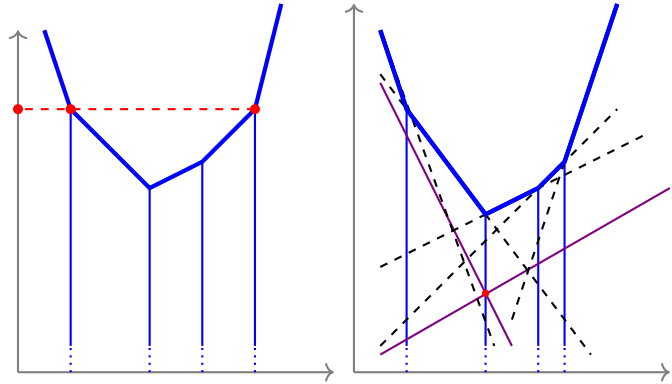
\begin{figure}[ht]
\begin{tikzpicture}[scale=0.35]
			\filldraw[color=white, fill=white, thick](-3,3) circle (0.1);
			\draw[thick, gray,->] (0,0)--(12,0);
			\draw[thick, gray,->] (0,0)--(0,13);
			\draw[ultra thick, blue] (1,13)--(2,10)--(5,7)--(7,8)--(9,10)--(10,14);
			\draw[thick, blue] (2,10)--(2,1);
			\draw[thick, blue] (5,7)--(5,1);
			\draw[thick, blue] (7,8)--(7,1);
			\draw[thick, blue] (9,10)--(9,1);
			\draw[thick, blue,dotted] (2,0)--(2,1);
			\draw[thick, blue,dotted] (5,0)--(5,1);
			\draw[thick, blue,dotted] (7,0)--(7,1);
			\draw[thick, blue,dotted] (9,0)--(9,1);
			\draw[thick, red, dashed] (9,10)--(0,10);
			\filldraw[color=red, fill=red, thick](9,10) circle (0.15);
			\filldraw[color=red, fill=red, thick](2,10) circle (0.15);
			\filldraw[color=red, fill=red, thick](0,10) circle (0.15);
			
		\end{tikzpicture}
		\begin{tikzpicture}[scale=0.35]
			\draw[thick, gray,->] (0,0)--(12,0);
			\draw[thick, gray,->] (0,0)--(0,14);
			\draw[ultra thick, blue] (1,13)--(2,10)--(5,6)--(7,7)--(8,8)--(10,14);
			\draw[thick, blue] (2,10)--(2,1);
			\draw[thick, blue] (5,6)--(5,1);
			\draw[thick, blue] (7,7)--(7,1);
			\draw[thick, blue] (8,8)--(8,1);
			\draw[thick, blue,dotted] (2,0)--(2,1);
			\draw[thick, blue,dotted] (5,0)--(5,1);
			\draw[thick, blue,dotted] (7,0)--(7,1);
			\draw[thick, blue,dotted] (8,0)--(8,1);
			\draw[thick, violet] (1,11)--(6,1);
			\draw[thick, violet] (1,0.67)--(12,7);
			\filldraw[color=red, fill=red, thick](5,3) circle (0.1);
			\draw[thick,dashed] (1,13)--(5.33,1);
			\draw[thick,dashed] (1,11.33)--(9,0.67);
			\draw[thick,dashed] (1,1)--(10,10);
			\draw[thick,dashed] (6,2)--(10,14);
			\draw[thick, dashed] (1,4)--(11,9);
			\draw[ultra thick, blue] (1,13)--(2,10)--(5,6)--(7,7)--(8,8)--(10,14);
			
		\end{tikzpicture}
  \caption{A tropical polynomial with coinciding critical values (on the left) and a tropical polynomial with a degenerate critical point (on the right).}
\end{figure}

Our Example \ref{exa0}.4 now implies that the tropical fan of the Morse discriminant is contained in the tropical Morse discriminant.

Furthermore, Definition \ref{deftropmorse} interprets the codimension $k$ skeleton $S_k$ of the corner locus of the basecondary function $\mathscr B_{-\gcd}$ as the set of tropical Laurent polynomials with a sufficiently degenerate ramification profile, i.e. as the tropical analogue of the singularity strata of the LL map. 

\begin{quest}\label{q1}
To what extent tropical fans of codimension $k$ singularity strata of the LL map belong to the fans $S_k$?
\end{quest}
Our result implies the positive answer for $k=1$, and thus gives a hope for an interesting answer for higher $k$. This would be parallel to the following fact about the Gelfand--Kapranov--Zelevinsky setting, specializing to Remark \ref{mainrem} for $k=1$. Denote by $d$ the number of non-zero roots of a typical polynomial in $\C^{A(\bar m)}$, i.e. $\max {A(\bar m)}-\min {A(\bar m)}$.
\begin{utver}\label{tropseveri1}
If the set $\{$polynomials with $d-k$ roots$\}$ has codimension $k$ (i.e. the expected one) in $\C^{A(\bar m)}$, then its tropical fan is contained in the codimension $k$ skeleton of the secondary fan (i.e. the fan dual to $S_A$).
\end{utver}
This in particular holds for the space of all polynomials of degree $d$ (i.e. when $A=\id:\{1,\ldots,m\}\to\{1,\ldots,m\}$).

\begin{rem} 1. The question about critical points of polynomials is infinitely more complicated than the proposition about roots of polynomials, because we study (polynomial) maps with prescribed singularities over distinct points of the range, vs prescribed singularities over one point 0.

2. Similarly, our result of Example \ref{exa0}.4 is drastically more complicated than the Gelfand--Kapranov--Zelevinsky observation, because we have to study maps with multiple singularities over some point of the range (more specifically, two singularities of type $A_1$), vs one singularity.

3. Question \ref{q1} may relate the basecondary function $\mathscr B_{-\gcd}$ and its tropical interpretation to enumerative geometry of ramified coverings. A well known tropical point of view on this topic (\cite{hannah}, \cite{bbm}) operates with tropical ramified coverings understood as line projections of abstract tropical curves, i.e. weighted graphs without any natural embedding into an ambient space (in contrast to the graph of a tropical polynomial and its projection to the vertical axis as in Figure 1). As a consequence, the conditions like Definition \ref{deftropmorse}.3 and pictures like Figure 1.2 make no sense from the usual point of view. It would be interesting to understand whether they allow to interestingly refine tropical counts of ramified coverings.
\end{rem} 
\subsection*{Acknowledgements}
The second author was funded by Horizon Europe ERC (Grant number: 101045750, Project acronym: HodgeGeoComb).

\section{The support function of a basecondary polytope}\label{sec:basecondary}
\subsection{Preliminaries}\label{sec:preliminaries}

\begin{defin}\label{def:submodular_fun}
Let $S$ be a finite set, $|S|=m.$ A function $F\colon 2^S\to\R$ is called {\it submodular} if, for any subset $X\subset S$ and any elements $x_1,x_2\in S\setminus X$ such that $x_1\neq x_2,$ the following inequality holds: 
\begin{equation}\label{eq:submodular}
F(X\cup\{x_1\})+F(X\cup\{x_2\})\geqslant F(X)+F(X\cup\{x_1,x_2\}).
\end{equation}
\end{defin}

In what follows we will only consider submodular functions with an extra requirement: $F(\emptyset)=0.$ 

\begin{exa}\label{exa:submodular_fn}
The following functions are submodular: 
\begin{itemize}
\item For any $X\subset S,~C(X)=-\dfrac{|X|}{|S|};$
\item the rank function $\rk_M$ on a matroid $M;$
\item for any $X\subset S,~F(S) = \sum\limits_{i=1}^{|X|}(|S|-i+1).$
\end{itemize}
\end{exa}

\begin{defin}\label{def:submodular_poly}
Let $F\colon 2^S\to \R$ be a submodular function such that $F(\emptyset)=0.$ Its {\it submodular polyhedron} is the polyhedron $P_F\subset\R^S$ defined as follows: 
$$P_{F}=\{y\in\R^S\mid \sum_{s\in X}y_s\leqslant F(X) \text{ for all } X\subset S\}.$$
\end{defin}

\begin{defin}\label{def:base_poly}
Let $F\colon 2^S\to \R$ be a submodular function such that $F(\emptyset)=0.$ The polytope $$B_F= P_F\cap \{\sum_{s\in S} y_s=F(S)\}\subset\R^S$$ is called the {\it base polytope} of the function $F.$
\end{defin}\label{lovaszdef}
We now describe the support function of a base polytope.
\begin{defin}\label{def:Lovasz_ext}
Given a function $F\colon \{0,1\}^S\to\R$ with $F(\emptyset)=0,$ we construct a piecewise linear  function $\widetilde{F}\colon\R^S\to\R$ as follows. Let $\sigma\colon S\to S$ be a permutation sorting the coordinates of $x\in\R^S$ in descending order: 
$$x_{s_{\sigma(1)}}\geqslant x_{s_{\sigma(2)}}\geqslant\ldots\geqslant  x_{s_{\sigma(m)}}.$$
Then we set $Y_0=\emptyset,~Y_i=\{\sigma(1),\ldots,\sigma(i)\},~1\leqslant i\leqslant m,$ and define 
\begin{equation}\label{eq:Lovasz_ext}
\widetilde{F}(x)=\sum_{i=1}^{m} x_{s_{\sigma(i)}}(F(Y_i)-F(Y_{i-1})).
\end{equation}
The function $\widetilde{F}$ is called the {\it Lov\'{a}sz extension} of the function $F.$ 
\end{defin}

Below we provide a list of basic properties satisfied by the Lov\'{a}sz extension all of which can be verified by a straightforward computation. 

For a subset $X\subset S,$ by $1_X$ we denote the vector in $\{0,1\}^S$ whose entries are $1$ or $0$ depending on whether the corresponding element of the set $S$ belongs to $X$ or not:
$$(1_X)_s=\begin{cases*}
	1,\text{ if }s\in X\\
	0,\text{ otherwise}.
\end{cases*}$$

\begin{utver}\label{prop:Lovasz_ext}
The Lov\'{a}sz extension $\widetilde{F}$ of a function $F\colon \{0,1\}^S\to\R,$ satisfies the following properties:
\begin{itemize}
	\item For any $X\subset S,$ we have $\widetilde{F}(1_X)=F(X);$
    \item The Lov\'{a}sz extension is continuous;
	\item The Lov\'{a}sz extension is positively homogeneous: $\widetilde{F}(\lambda x)=\lambda\widetilde{F}(x),$ for all $\lambda\geqslant 0.$
\end{itemize}
\end{utver}

The following result concerns convexity of Lov\'{a}sz extension. 

\begin{theor}[\cite{lov} and \cite{edm} respectively]\label{theor:Lovasz_ext_convex}
The Lov\'{a}sz extension $\widetilde{F}$ is convex if and only if the initial function $F\colon \{0,1\}^S\to\R$ is submodular. Moreover, for a submodular function $F,$ its Lov\'{a}sz extension $\widetilde{F}$ is exactly the support function of the base polytope $B_F$ introduced in Definition \ref{def:base_poly}.
\end{theor}

\subsection{Dramatis Person\ae}\label{subsec:notation}
\begin{itemize}
\item[--] a pair $m>1$ and $n\geqslant 0$ of integers;
\item[--] the set $\overline{m}=\{1,\ldots,m\}$ of all integers between $1$ and $m\in\N;$
\item[--] an map $A\colon\overline{m}\to\R^n,$ which is additionally assumed to be injective when $n\geqslant 1;$
\item[--] a function $F\colon \{I\subset  2^{\overline{m}}\mid |I|\geqslant n\}\to\R;$ 
\item[--] a map $\gamma\colon\overline{m}\to\R;$ 
\item[--] a linear map $L\colon \R^n\to\R;$
\item[--] the permutation $\gamma^L\colon\overline{m}\to\overline{m}$ that orders the elements by the values of $\gamma-L\circ A$ in descending order (for the details, see Definitions \ref{def:perm_generic} and \ref{def:perm_circuit} below);
\item[--] the composition $\widetilde{\gamma}^L=(A,\gamma)\circ\gamma^L\colon\overline{m}\to \R^{n+1
}.$
\end{itemize}

In what follows, by $[b_1,\ldots,b_k]$ we will denote ordered tuples of elements $b_i\in\overline{m}.$ Finally, we will use triangular brackets to denote oriented volumes of simplices in $\R^n$ or $\R^{n+1}$ (depending on the context).

\subsection{\texorpdfstring{The basecondary function $\mathscr{B}_F$}{the basecondary function} is piecewise linear} \label{subsec:generic}

\begin{defin}\label{def:simp_L}
A map $L\colon\R^n\to\R$ is called {\it $\gamma$-simplicial} if the function $\gamma-L\circ A$ achieves its maximal value at exactly $n+1$ elements $\{b_1,\ldots,b_{n+1}\}\subset\{1\ldots m\}$ such that the images $A(b_i),~1\leqslant i\leqslant n+1,$ form an affinely independent set in $\R^n.$
\end{defin}

\begin{defin}\label{def:gen_L}
A $\gamma$-simplicial $L$ is called {\it generic} if the values of the function $\gamma-L\circ A$ at the elements of $\{1\ldots m\}\setminus\{b_1,\ldots,b_{n+1}\}$ are all distinct. 
\end{defin}

\begin{defin}\label{def:gen_gamma}
The map $\gamma\colon\{1,\ldots,m\}\to\R$ is called {\it generic} if every $\gamma$-simplicial $L$ is generic. 
\end{defin}

\begin{exa}\label{exa:dim1_generic}
Fix $m=4$ and $n=1.$ Let $A$ be the map sending $[1,2,3,4]$ to $[1,3,6,7]\subset\R.$ Take $\gamma_1\colon[1,2,3,4]\to\R$ such that $\gamma_1(1)=2,~\gamma_1(2)=4,~\gamma_1(3)=5,~\gamma_1(4)=3.$ A linear function $x\mapsto \alpha\cdot x$ is $\gamma_1$-simplicial for $\alpha=\frac{\gamma_1(2)-\gamma_1(1)}{a_2-a_1}=1, \alpha=\frac{\gamma_1(3)-\gamma_1(2)}{a_3-a_2}=\dfrac{1}{3}$ and $\alpha=\frac{\gamma_1(4)-\gamma_1(3)}{a_4-a_3}=-2.$ Moreover, one can easily check that for each of the abovementioned values of $\alpha,$ the function $L$ is generic (see Definition \ref{def:gen_L} and Figure \ref{fig:dim1_generic}). For example, we have $\gamma_1(1)-1\cdot a_1=2=\gamma_1(2)-1\cdot a_2,~\gamma_1(3)-1\cdot a_3=-1,~\gamma_1(4)-1\cdot a_4=-4.$ Therefore, by Definition \ref{def:gen_gamma}, $\gamma_1$ itself is generic. 
\end{exa}

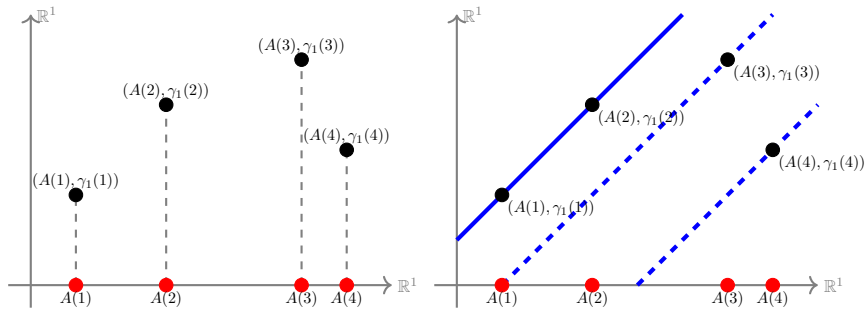
\begin{figure}[ht]
\begin{center}
\begin{tikzpicture}[scale=0.6]
\draw[thick, gray,->] (-0.5,0)--(8,0);
\draw[thick, gray,->] (0,-0.5)--(0,6);
\draw[thick, gray, dashed] (1,0)--(1,2);
\draw[thick, gray, dashed] (3,0)--(3,4);
\draw[thick, gray, dashed] (6,0)--(6,5);
\draw[thick, gray, dashed] (7,0)--(7,3);
\draw[fill,red] (1,0) circle (1.5mm);
\draw[fill,red] (3,0) circle (1.5mm);
\draw[fill,red] (6,0) circle (1.5mm);
\draw[fill,red] (7,0) circle (1.5mm);
\draw[fill,black] (1,2) circle (1.5mm);
\draw[fill,black] (3,4) circle (1.5mm);
\draw[fill,black] (6,5) circle (1.5mm);
\draw[fill,black] (7,3) circle (1.5mm);
\node[right,gray,scale=0.6] at (8,0) {$\R^1$};
\node[right,gray,scale=0.6] at (0,6) {$\R^1$};
\node[below,scale=0.55] at (1,0) {$A(1)$};
\node[below,scale=0.55] at (3,0) {$A(2)$};
\node[below,scale=0.55] at (6,0) {$A(3)$};
\node[below,scale=0.55] at (7,0) {$A(4)$};
\node[above,scale=0.55] at (1,2) {$(A(1),\gamma_1(1))$};
\node[above,scale=0.55] at (3,4) {$(A(2),\gamma_1(2))$};
\node[above,scale=0.55] at (6,5) {$(A(3),\gamma_1(3))$};
\node[above,scale=0.55] at (7,3) {$(A(4),\gamma_1(4))$};
\end{tikzpicture}
\begin{tikzpicture}[scale=0.6]
\draw[thick, gray,->] (-0.5,0)--(8,0);
\draw[thick, gray,->] (0,-0.5)--(0,6);
\draw[blue,ultra thick] (0,1)--(5,6);
\draw[blue,ultra thick, dashed, opacity=0.75] (1,0)--(7,6);
\draw[blue,ultra thick, dashed, opacity=0.5] (4,0)--(8,4);
\draw[fill,red] (1,0) circle (1.5mm);
\draw[fill,red] (3,0) circle (1.5mm);
\draw[fill,red] (6,0) circle (1.5mm);
\draw[fill,red] (7,0) circle (1.5mm);
\draw[fill,black] (1,2) circle (1.5mm);
\draw[fill,black] (3,4) circle (1.5mm);
\draw[fill,black] (6,5) circle (1.5mm);
\draw[fill,black] (7,3) circle (1.5mm);
\node[right,gray,scale=0.6] at (8,0) {$\R^1$};
\node[right,gray,scale=0.6] at (0,6) {$\R^1$};
\node[below,scale=0.55] at (1,0) {$A(1)$};
\node[below,scale=0.55] at (3,0) {$A(2)$};
\node[below,scale=0.55] at (6,0) {$A(3)$};
\node[below,scale=0.55] at (7,0) {$A(4)$};
\node[below right,scale=0.55] at (1,2) {$(A(1),\gamma_1(1))$};
\node[below right,scale=0.55] at (3,4) {$(A(2),\gamma_1(2))$};
\node[below right,scale=0.55] at (6,5) {$(A(3),\gamma_1(3))$};
\node[below right,scale=0.55] at (7,3) {$(A(4),\gamma_1(4))$};
\end{tikzpicture}
\end{center}
\caption{The map $L\colon x\mapsto 1\cdot x$ is a generic $\gamma_1$-simplicial map.}\label{fig:dim1_generic}
\end{figure} 

\begin{exa}\label{exa:dim1_ng}
In the setting of Example \ref{exa:dim1_generic}, take $\gamma_2$ such that $\gamma_2(1)=1,~\gamma_2(2)=3,~\gamma_2(3)=3,~\gamma_2(4)=1.$ Then the function $x\mapsto 0\cdot x$ is $\gamma$-simplicial, but not generic. Therefore, $\gamma_2$ is not generic either.
\end{exa}

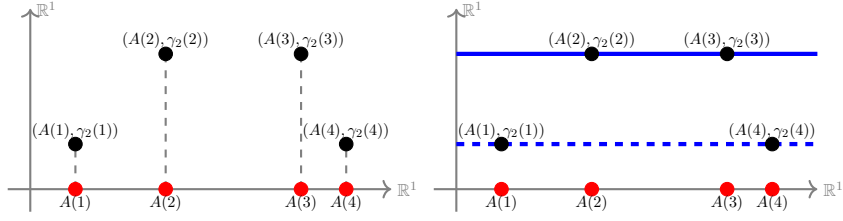
\begin{figure}[ht]
\begin{center}
\begin{tikzpicture}[scale=0.6]
\draw[thick, gray,->] (-0.5,0)--(8,0);
\draw[thick, gray,->] (0,-0.5)--(0,4);
\draw[thick, gray, dashed] (1,0)--(1,1);
\draw[thick, gray, dashed] (3,0)--(3,3);
\draw[thick, gray, dashed] (6,0)--(6,3);
\draw[thick, gray, dashed] (7,0)--(7,1);
\draw[fill,red] (1,0) circle (1.5mm);
\draw[fill,red] (3,0) circle (1.5mm);
\draw[fill,red] (6,0) circle (1.5mm);
\draw[fill,red] (7,0) circle (1.5mm);
\draw[fill,black] (1,1) circle (1.5mm);
\draw[fill,black] (3,3) circle (1.5mm);
\draw[fill,black] (6,3) circle (1.5mm);
\draw[fill,black] (7,1) circle (1.5mm);
\node[right,gray,scale=0.6] at (8,0) {$\R^1$};
\node[right,gray,scale=0.6] at (0,4) {$\R^1$};
\node[below,scale=0.55] at (1,0) {$A(1)$};
\node[below,scale=0.55] at (3,0) {$A(2)$};
\node[below,scale=0.55] at (6,0) {$A(3)$};
\node[below,scale=0.55] at (7,0) {$A(4)$};
\node[above,scale=0.55] at (1,1) {$(A(1),\gamma_2(1))$};
\node[above,scale=0.55] at (3,3) {$(A(2),\gamma_2(2))$};
\node[above,scale=0.55] at (6,3) {$(A(3),\gamma_2(3))$};
\node[above,scale=0.55] at (7,1) {$(A(4),\gamma_2(4))$};
\end{tikzpicture}
\begin{tikzpicture}[scale=0.6]
\draw[thick, gray,->] (-0.5,0)--(8,0);
\draw[thick, gray,->] (0,-0.5)--(0,4);
\draw[blue,ultra thick] (0,3)--(8,3);
\draw[blue,ultra thick, dashed, opacity=0.75] (0,1)--(8,1);
\draw[fill,red] (1,0) circle (1.5mm);
\draw[fill,red] (3,0) circle (1.5mm);
\draw[fill,red] (6,0) circle (1.5mm);
\draw[fill,red] (7,0) circle (1.5mm);
\draw[fill,black] (1,1) circle (1.5mm);
\draw[fill,black] (3,3) circle (1.5mm);
\draw[fill,black] (6,3) circle (1.5mm);
\draw[fill,black] (7,1) circle (1.5mm);
\node[right,gray,scale=0.6] at (8,0) {$\R^1$};
\node[right,gray,scale=0.6] at (0,4) {$\R^1$};
\node[below,scale=0.55] at (1,0) {$A(1)$};
\node[below,scale=0.55] at (3,0) {$A(2)$};
\node[below,scale=0.55] at (6,0) {$A(3)$};
\node[below,scale=0.55] at (7,0) {$A(4)$};
\node[above,scale=0.55] at (1,1) {$(A(1),\gamma_2(1))$};
\node[above,scale=0.55] at (3,3) {$(A(2),\gamma_2(2))$};
\node[above,scale=0.55] at (6,3) {$(A(3),\gamma_2(3))$};
\node[above,scale=0.55] at (7,1) {$(A(4),\gamma_2(4))$};
\end{tikzpicture}
\end{center}
\caption{The map $L\colon x\mapsto 0\cdot x$ is $\gamma_2$-simplicial, but not generic.}\label{fig:dim1_ng}
\end{figure}

\begin{defin}\label{def:perm_generic}
For the given generic $\gamma$ and $\gamma$-simplicial $L$, the permutation \linebreak $\gamma^L\colon\{1,\ldots,m\}\to\{1,\ldots,m\}$ is well-defined (up to even permutations of the first $n+1$ points) by the following two conditions: 
\begin{itemize}
    \item For every $n+1<i\leqslant m,$ the permutation $\gamma^L$ sends $i$ to the element $j,$ at which the function $\gamma-L\circ A$ achieves its $(i-n)-$th highest value;
    \item the oriented volume $\Big\langle A\gamma^L[1,\ldots,n+1]\Big\rangle$ is positive.
\end{itemize}
\end{defin}

\begin{exa}\label{exa:dim0_generic}
If $n=0,$ then there is only one map $A\colon\overline{m}\to\R^0$ and only one linear map $L\colon\R^0\to\R^0.$ The map $L$ is $\gamma$-simplicial if and only if $\gamma$ attains its maximum at exactly one element of $\overline{m}.$ The map $L$ (and thus the map $\gamma$ itself) is generic, if all the values $\gamma(i),~i\in\overline{m}$ are distinct. If $\gamma$ is generic, then the permutation $\sigma\coloneqq\gamma^L$ ordering the elements of $\overline{m}$ by the values of $\gamma$ is  uniquely defined by the first property in Definition \ref{def:perm_generic}. 
\end{exa}

\begin{exa}\label{exa:dim1_permutations}
In Example \ref{exa:dim1_generic}, for a generic map $\gamma_1$ we found all the $\gamma_1$-simplicial (and therefore generic) functions $L_1(x)=x,~L_2(x)=x/3,$ and $L_3=-2x.$ Now let us construct the corresponding permutations $\gamma^{L_1},~\gamma^{L_2},~\gamma^{L_3}$ using Definition \ref{def:perm_generic}. We already computed that $\gamma_1(1)-L_1\circ A(1)=\gamma_1(2)-L_1\circ A(2)>\gamma_1(3)-L_1\circ A(3)>\gamma_1(4)-L_1\circ A(4).$ Thus $\gamma^L(3)=3, \gamma^L(4)=4$ by the first condition, and $\gamma^L(1)=1, \gamma^L(2)=2$ by the second one. Thus we obtain that $\gamma^{L_1}$ is the trivial permutation $[1,2,3,4]\mapsto[1,2,3,4]$. Similarly, since $\gamma_1(2)-L_2\circ A(2)=\gamma_1(3)-L_2\circ A(3)>\gamma_1(1)-L_2\circ A(1)>\gamma_1(4)-L_2\circ A(4),$ we have $\gamma_1^{L_2}\colon[1,2,3,4]\mapsto[2,3,1,4].$ Finally, we have $\gamma_1^{L_3}\colon[1,2,3,4]\mapsto[3,4,2,1].$
\end{exa}

\begin{rem}\label{rem:sum_simplicial}
For any generic $\gamma,$ the basecondary function $\mathscr{B}_F$ from Definition \ref{basec} is linear in the neighborhood of $\gamma$ and can be rewritten as follows: 
\begin{equation}\label{eq:sum_simplicial}
    \mathscr{B}_F(\gamma)=\sum_{\substack{L\in(\R^n)^* \\ L\textrm{ simplicial}}}\sum_{i=1}^m\Big\langle \widetilde{\gamma}^L[1,\ldots,n+1,i]\Big\rangle\Big(F\big(\{1,\ldots,i-1\}\big)-F\big(\{1,\ldots,i\}\big)\Big).
\end{equation}    
\end{rem}

\begin{exa}\label{exa: dim0_base_polytope}
In the setting of Example \ref{exa:dim0_generic}, let us have a more detailed look at the sum (\ref{eq:sum_simplicial}) for $n=0.$ The simplices $\widetilde{\gamma}^L[1,\ldots,n+1,i]$ are just intervals of length $\max_{b\in\overline{m}}(\gamma(b))-\gamma(i)$ and, for generic $\gamma,$ the  expression from Definition \ref{basec} can be rewritten as follows: 
\begin{multline}
 \mathscr{B}_F(\gamma)=\sum_{i=1}^m\Big(\max\limits_{b\in\overline{m}}(\gamma(b))-\gamma(\sigma(i))\Big)\Big(F\big(\{\sigma(1),\ldots,\sigma(i-1)\}\big)-F\big(\{\sigma(1),\ldots,\sigma(i)\}\big)\Big)=\\=\sum\limits_{i\in\overline{m}}\Big(\gamma(\sigma(i))-\gamma(\sigma(1))\Big)\Big(F\big(\{\sigma(1),\ldots,\sigma(i)\}\big)-F\big(\{\sigma(1),\ldots,\sigma(i-1)\}\big)\Big)=\\=\sum\limits_{i\in\overline{m}}\gamma(\sigma(i))\Big(F\big(\{\sigma(1),\ldots,\sigma(i)\}\big)-F\big(\{\sigma(1),\ldots,\sigma(i-1)\}\big)\Big)-\gamma(\sigma(1))\Big (F(\overline{m})-F(\varnothing)\Big).
\end{multline}
Comparing the first summand with (\ref{eq:Lovasz_ext}), we can conclude that it is equal to the Lov\'{a}sz extension of the function $F.$ Therefore, by Theorem \ref{theor:Lovasz_ext_convex}, it is convex if and only if $F$ is submodular (see Definition \ref{def:submodular_fun}) and is equal to the support function of the base polytope of $F$ (see Definition \ref{def:base_poly}). The second summand equals $-F(\overline{m})$ times the support function of the $(m-1)-$simplex.
\end{exa}

\begin{exa}\label{exa:dim1_basecondary}
For the map $\gamma_1$ from Examples \ref{exa:dim1_generic} and \ref{exa:dim1_permutations}, the sum (\ref{eq:sum_simplicial}) can be rewritten as follows: 
\begin{multline}
\mathscr{B}_F(\gamma_1)=\Big\langle \big(a_1,\gamma_1(1)\big),\big(a_2,\gamma_1(2)\big),\big(a_3,\gamma_1(3)\big)\Big\rangle\Big(F\big(\{1,2\}\big)-F\big(\{1,2,3\}\big)\Big)+\\+\Big\langle \big(a_1,\gamma_1(1)\big),\big(a_2,\gamma_1(2)\big),\big(a_4,\gamma_1(4)\big)\Big\rangle\Big(F\big(\{1,2,3\}\big)-F\big(\{1,2,3,4\}\big)\Big)+\\+\Big\langle \big(a_2,\gamma_1(2)\big),\big(a_3,\gamma_1(3)\big),\big(a_1,\gamma_1(1)\big)\Big\rangle\Big(F\big(\{2,3\}\big)-F\big(\{1,2,3\}\big)\Big)+\\+\Big\langle \big(a_2,\gamma_1(2)\big),\big(a_3,\gamma_1(3)\big),\big(a_4,\gamma_1(4)\big)\Big\rangle\Big(F\big(\{1,2,3\}\big)-F\big(\{1,2,3,4\}\big)\Big)+\\+\Big\langle \big(a_3,\gamma_1(3)\big),\big(a_4,\gamma_1(4)\big),\big(a_2,\gamma_1(2)\big)\Big\rangle\Big(F\big(\{3,4\}\big)-F\big(\{2,3,4\}\big)\Big)+\\+\Big\langle \big(a_3,\gamma_1(3)\big),\big(a_4,\gamma_1(4)\big),\big(a_1,\gamma_1(1)\big)\Big\rangle\Big(F\big(\{2,3,4\}\big)-F\big(\{1,2,3,4\}\big)\Big).
\end{multline}
\end{exa}

\subsection{Convexity of \texorpdfstring{the function $\mathscr{B}_F$}{the basecondary function}} \label{subsec:circuital}

\begin{defin}\label{def:circ_L}
A map $L\colon\R^n\to\R$ is called \it{$\gamma$-circuital} if the function $\gamma-L\circ A$ achieves its maximal value at $n+2$ elements $\{b_1,\ldots,b_{n+2}\}\subset\{1\ldots m\}$ such that the set $A(\{b_1,\ldots,b_{n+2})$ is not contained in any $(n-1)-$plane.
\end{defin}

\begin{exa}\label{exa:circuits_dim1}
In the setting of Examples \ref{exa:dim1_generic} and \ref{exa:dim1_ng}, let us consider the map \linebreak $\gamma_3\colon[1,2,3,4]\to \R$ such that $\gamma_3(1)=\gamma_3(2)=\gamma_3(3)=3$ and $\gamma_3(4)=1.$ Then the function $L\colon x\mapsto \R$ is $\gamma_3$-circuital.
\end{exa}

\begin{figure}[ht]
\begin{center}
\begin{tikzpicture}[scale=0.6]
\draw[thick, gray,->] (-0.5,0)--(8,0);
\draw[thick, gray,->] (0,-0.5)--(0,4);
\draw[thick, gray, dashed] (1,0)--(1,3);
\draw[thick, gray, dashed] (3,0)--(3,3);
\draw[thick, gray, dashed] (6,0)--(6,3);
\draw[thick, gray, dashed] (7,0)--(7,1);
\draw[fill,red] (1,0) circle (1.5mm);
\draw[fill,red] (3,0) circle (1.5mm);
\draw[fill,red] (6,0) circle (1.5mm);
\draw[fill,red] (7,0) circle (1.5mm);
\draw[fill,black] (1,3) circle (1.5mm);
\draw[fill,black] (3,3) circle (1.5mm);
\draw[fill,black] (6,3) circle (1.5mm);
\draw[fill,black] (7,1) circle (1.5mm);
\node[right,gray,scale=0.6] at (8,0) {$\R^1$};
\node[right,gray,scale=0.6] at (0,4) {$\R^1$};
\node[below,scale=0.55] at (1,0) {$A(1)$};
\node[below,scale=0.55] at (3,0) {$A(2)$};
\node[below,scale=0.55] at (6,0) {$A(3)$};
\node[below,scale=0.55] at (7,0) {$A(4)$};
\node[above,scale=0.55] at (1,3) {$(A(1),\gamma_3(1))$};
\node[above,scale=0.55] at (3.3,3) {$(A(2),\gamma_3(2))$};
\node[above,scale=0.55] at (6,3) {$(A(3),\gamma_3(3))$};
\node[above,scale=0.55] at (7,1) {$(A(4),\gamma_3(4))$};
\end{tikzpicture}
\begin{tikzpicture}[scale=0.6]
\draw[thick, gray,->] (-0.5,0)--(8,0);
\draw[thick, gray,->] (0,-0.5)--(0,4);
\draw[blue,ultra thick] (0,3)--(8,3);
\draw[blue,ultra thick, dashed, opacity=0.75] (0,1)--(8,1);
\draw[fill,red] (1,0) circle (1.5mm);
\draw[fill,red] (3,0) circle (1.5mm);
\draw[fill,red] (6,0) circle (1.5mm);
\draw[fill,red] (7,0) circle (1.5mm);
\draw[fill,black] (1,3) circle (1.5mm);
\draw[fill,black] (3,3) circle (1.5mm);
\draw[fill,black] (6,3) circle (1.5mm);
\draw[fill,black] (7,1) circle (1.5mm);
\node[right,gray,scale=0.6] at (8,0) {$\R^1$};
\node[right,gray,scale=0.6] at (0,4) {$\R^1$};
\node[below,scale=0.55] at (1,0) {$A(1)$};
\node[below,scale=0.55] at (3,0) {$A(2)$};
\node[below,scale=0.55] at (6,0) {$A(3)$};
\node[below,scale=0.55] at (7,0) {$A(4)$};
\node[above,scale=0.55] at (1,3) {$(A(1),\gamma_3(1))$};
\node[below,scale=0.55] at (3,3) {$(A(2),\gamma_3(2))$};
\node[above,scale=0.55] at (6,3) {$(A(3),\gamma_3(3))$};
\node[above,scale=0.55] at (7,1) {$(A(4),\gamma_3(4))$};
\end{tikzpicture}
\end{center}
\caption{The map $L\colon x\mapsto 0\cdot x$ is $\gamma_3$-circuital.}\label{fig:dim1_circ}
\end{figure}
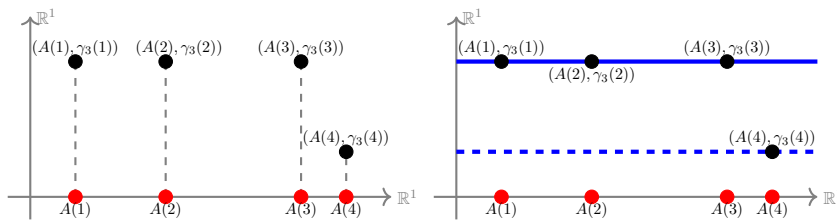

\begin{rem}\label{rem:pq}
If the set $A(\{b_1,\ldots,b_{n+2}\})$ is not contained in hyperplane, then upon a suitable reordering of points, there exists exactly one affine relation of the form $\sum_{i=1}^p \lambda_iA(b_i)=\sum_{j=1}^{q}\mu_jA(b_j)$ with positive $\lambda_i,~1\leqslant i\leqslant p,$ and $\mu_j,~1\leqslant j\leqslant q.$ Note that the relation does not necessarily have to involve all $(n+2)$ points (see Example \ref{exa:circuits_dim2}), thus $p+q$ can be strictly smaller than $n+2.$
\end{rem}

\begin{exa}\label{exa:circuits_dim2}
Take $n=2.$ In the setting of Remark \ref{rem:pq}, Figure \ref{fig:dim2_circ} shows all possible configurations of the points $A(b_1),A(b_2),A(b_3),A(b_4),$ where $\{b_1,b_2,b_3,b_4\}$ is the set of maximizers of the function $\gamma-L\circ A$ for a $\gamma$-circuital map $L\colon\overline{m}\to\R^2.$ For the first configuration, we have $p=1$ and $q=3,$ while for the second one, we have $p=2$ and $q=2$. Finally, for the last configuration, we have $p=1$ and $q=2$, thus, as mentioned in Remark \ref{rem:pq}, we have a strict inequality $p+q<n+2.$
\end{exa}
\begin{figure}[ht]
\begin{center}
\begin{tikzpicture}
\draw[gray,thick] (-0.5,0)--(2.5,0);
\draw[gray,thick] (-0.25,-0.5)--(1.25,2.5);
\draw[gray,thick] (2.25,-0.5)--(0.75,2.5);
\draw[fill,black] (0,0) circle (1mm);
\draw[fill,black] (1,0.75) circle (1mm);
\draw[fill,black] (1,2) circle (1mm);
\draw[fill,black] (2,0) circle (1mm);
\end{tikzpicture}
\begin{tikzpicture}
\draw[fill,white] (-2,0) circle (1mm);
\draw[gray,thick] (0,-0.5)--(0,2.5);
\draw[gray,thick] (-0.5,0)--(2.5,0);
\draw[gray,thick] (2,-0.5)--(2,2.5);
\draw[gray,thick] (-0.5,2)--(2.5,2);
\draw[fill,black] (0,0) circle (1mm);
\draw[fill,black] (2,0) circle (1mm);
\draw[fill,black] (0,2) circle (1mm);
\draw[fill,black] (2,2) circle (1mm);
\end{tikzpicture}
\begin{tikzpicture}
\draw[gray,thick] (-0.5,0)--(2.5,0);
\draw[gray,thick] (-0.25,-0.5)--(1.25,2.5);
\draw[gray,thick] (2.25,-0.5)--(0.75,2.5);
\draw[fill,white] (-2,0) circle (1mm);
\draw[fill,black] (0,0) circle (1mm);
\draw[fill,black] (1,0) circle (1mm);
\draw[fill,black] (2,0) circle (1mm);
\draw[fill,black] (1,2) circle (1mm);
\end{tikzpicture}
\end{center}
\caption{Circuits in dimension 2.}\label{fig:dim2_circ} 
\end{figure}
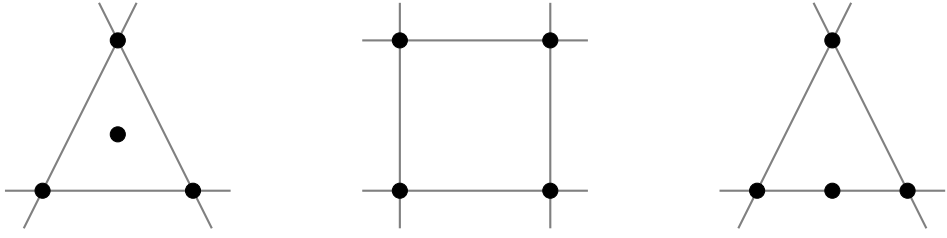

\begin{defin}\label{def:perm_circuit}
For the given generic $\gamma$ and a $\gamma$-circuital map $L$, the permutation $\gamma^L\colon\{1,\ldots,m\}\to\{1,\ldots,m\}$ is defined by the following conditions: 
\begin{itemize}
    \item For every $n+2<i\leqslant m,$ the permutation $\gamma^L$ sends $i$ to the element $j,$ at which the function $\gamma-L\circ A$ achieves its $(i-n-1)-$th highest value;
    \item $\conv(A\circ\gamma^L[1,\ldots,p])$ and $\conv(A\circ\gamma^L[p+1,\ldots,p+q])$ (see Remark \ref{rem:pq}) are simplices of complementary dimensions intersecting at the common interior point; 
    \item in the notation of Remark \ref{rem:pq}, 
    \begin{multline}\label{eq:perm_circuit}
      \vol(A\gamma^L[1,\ldots,n+2]) = \sum_{i=1}^p (-1)^i \Big\langle A\gamma^L\big([1,\ldots,n+2] \setminus i\big) \Big\rangle = \\=\sum_{i=p+1}^{p+q} (-1)^{i+1} \Big\langle  A\gamma^L\big([1,\ldots,n+2] \setminus i\big)\Big\rangle.
    \end{multline}
\end{itemize}
\end{defin}
\begin{rem}
The permutation from Definition \ref{def:perm_circuit} is not uniquely defined, but this will not affect the correctness of subsequent definitions involving $\gamma^L.$     
\end{rem}
\begin{exa}\label{exa:perm_circuital}
Let us revisit Example \ref{exa:circuits_dim2} and use Definition $\ref{def:perm_circuit}$ and label the points in the corresponding configurations (see Figure \ref{fig:dim2_circ_perm} below) so that the conditions 2 and 3 from Definition \ref{def:perm_circuit} hold. The labellings below obviously satisfy condition 2, let us check whether condition 3 also holds. Indeed, for the first configuration, we have 
\begin{multline*}
\vol(A\gamma^L[1,2,3,4])=(-1)^1\Big\langle A\gamma^L[2,3,4]\Big\rangle=\\=(-1)^{2+1}\Big\langle A\gamma^L[1,3,4]\Big\rangle+(-1)^{3+1}\Big\langle A\gamma^L[1,2,4]\Big\rangle+(-1)^{4+1}\Big\langle A\gamma^L[1,2,3]\Big\rangle.    
\end{multline*}
Similarly, for the second configuration, we have 
\begin{multline*}
\vol(A\gamma^L[1,2,3,4])=(-1)^1\Big\langle A\gamma^L[2,3,4]\Big\rangle=(-1)^2\Big\langle A\gamma^L[1,3,4]\Big\rangle=\\=(-1)^{3+1}\Big\langle A\gamma^L[1,2,4]\Big\rangle+(-1)^{4+1}\Big\langle A\gamma^L[1,2,3]\Big\rangle. 
\end{multline*}
Finally, the following equality holds for configuration 3:
$$\vol(A\gamma^L[1,2,3,4])=(-1)^1\Big\langle A\gamma^L[2,3,4]\Big\rangle=(-1)^{2+1}\Big\langle A\gamma^L[1,3,4]\Big\rangle+(-1)^{3+1}\Big\langle A\gamma^L[1,2,4]\Big\rangle.$$ 
\end{exa}
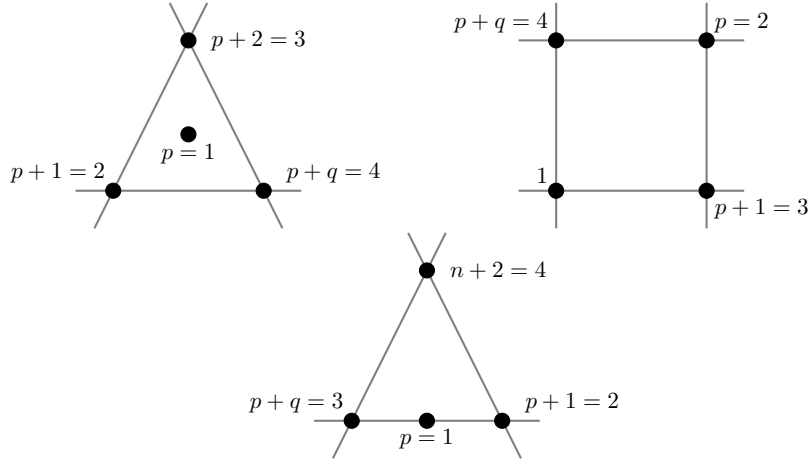
\begin{figure}[ht]
\begin{center}
\begin{tikzpicture}
\draw[gray,thick] (-0.5,0)--(2.5,0);
\draw[gray,thick] (-0.25,-0.5)--(1.25,2.5);
\draw[gray,thick] (2.25,-0.5)--(0.75,2.5);
\draw[fill,black] (0,0) circle (1mm);
\draw[fill,black] (1,0.75) circle (1mm);
\draw[fill,black] (1,2) circle (1mm);
\draw[fill,black] (2,0) circle (1mm);
\node[below,scale=0.8] at (1,0.73) {$p=1$};
\node[above left,scale=0.8] at (0,0) {$p+1=2$};
\node[right,scale=0.8] at (1.2,2) {$p+2=3$};
\node[above right,scale=0.8] at (2.2,0) {$p+q=4$};
\end{tikzpicture}
\begin{tikzpicture}
\draw[fill,white] (-2,0) circle (1mm);
\draw[gray,thick] (0,-0.5)--(0,2.5);
\draw[gray,thick] (-0.5,0)--(2.5,0);
\draw[gray,thick] (2,-0.5)--(2,2.5);
\draw[gray,thick] (-0.5,2)--(2.5,2);
\draw[fill,black] (0,0) circle (1mm);
\draw[fill,black] (2,0) circle (1mm);
\draw[fill,black] (0,2) circle (1mm);
\draw[fill,black] (2,2) circle (1mm);
\node[above left,scale=0.8] at (0,0) {$1$};
\node[above right,scale=0.8] at (2,2) {$p=2$};
\node[above left,scale=0.8] at (0,2) {$p+q=4$};
\node[below right,scale=0.8] at (2,0) {$p+1=3$};
\end{tikzpicture}
\begin{tikzpicture}
\draw[gray,thick] (-0.5,0)--(2.5,0);
\draw[gray,thick] (-0.25,-0.5)--(1.25,2.5);
\draw[gray,thick] (2.25,-0.5)--(0.75,2.5);
\draw[fill,white] (-2,0) circle (1mm);
\draw[fill,black] (0,0) circle (1mm);
\draw[fill,black] (1,0) circle (1mm);
\draw[fill,black] (2,0) circle (1mm);
\draw[fill,black] (1,2) circle (1mm);
\node[below,scale=0.8] at (1,0) {$p=1$};
\node[above left,scale=0.8] at (0,0) {$p+q=3$};
\node[right,scale=0.8] at (1.2,2) {$n+2=4$};
\node[above right,scale=0.8] at (2.2,0) {$p+1=2$};
\end{tikzpicture}
\end{center}
\caption{Labelling the points of the circuits in dimension 2.}\label{fig:dim2_circ_perm}
\end{figure}
The following can be seen e.g. from \cite{GKZ}.
\begin{lemma}\label{lemma:A}
In the same notation as above, if $\gamma$ is a generic point of the corner locus of the support function of the secondary polytope, then 

\begin{itemize}
    \item There is exactly one subset $I\subset\overline{m}$ such that $(A,\gamma)(I)$ is a circuit, $\conv((A,\gamma)(I))$ is a face of $\conv((A,\gamma)(\overline{m}))$, and all $\gamma$-simplicial and $\gamma$-circuital maps $L$ are generic;
    \item in a neighborhood of $\gamma$ the function $\mathscr{B}_F$ is a tropical binomial. 
\end{itemize} 
\end{lemma}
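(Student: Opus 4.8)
The plan is to recast the statement in terms of regular subdivisions of $A(\overline{m})$ and then to apply the standard structure theory of the secondary fan, following \cite{GKZ}. For $\gamma\colon\overline{m}\to\R$, set $Q_\gamma:=\conv\{(A(k),\gamma(k))\mid k\in\overline{m}\}\subset\R^{n+1}$ and let $S_\gamma$ be the regular subdivision of $A(\overline{m})$ read off from the upper faces of $Q_\gamma$ (those on which some $\gamma-L\circ A$ is maximized). The support function of the secondary polytope $S_A$ is linear exactly on the maximal cones of the secondary fan, which are the $\gamma$ inducing a fixed regular triangulation; hence its corner locus is the set of $\gamma$ for which $S_\gamma$ is not a triangulation, i.e. the codimension-$\ge 1$ skeleton of the secondary fan. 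A generic point $\gamma$ of this corner locus therefore lies in the relative interior of a codimension-one cone $C$.

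First I would produce the circuit $I$. By the description of the walls of the secondary fan (\cite{GKZ}, Ch.~7), a codimension-one cone $C$ corresponds to a subdivision $S$ refined by exactly two regular triangulations $T_\pm$ differing by a single bistellar flip; equivalently, $S_\gamma=S$ has a unique non-simplex cell, whose vertex set is $A(I)$ for a uniquely determined $I$ with $|I|=n+2$ such that $A(I)$ affinely spans $\R^n$. Since the points $(A(j),\gamma(j))$, $j\in I$, lie on the supporting hyperplane of this cell, $(A,\gamma)(I)$ is a circuit in the sense of Definition~\ref{def:circ_L} and Remark~\ref{rem:pq}, and $\conv((A,\gamma)(I))$ is an upper face of $Q_\gamma$, hence a face of $\conv((A,\gamma)(\overline{m}))$; uniqueness of $I$ is uniqueness of the non-simplex cell. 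Consequently every $\gamma$-circuital $L$ is the outer conormal of $\conv((A,\gamma)(I))$, and every $\gamma$-simplicial $L$ is the outer conormal of one of the simplex facets of $Q_\gamma$.

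Next I would verify that all these $L$ are generic (Definitions~\ref{def:gen_L} and~\ref{def:gen_gamma}), i.e. that $\gamma-L\circ A$ takes pairwise distinct values outside its maximizing face. The content here is that this fails only on a subset of $C$ of positive codimension in $C$ — equivalently, that no entire wall is ``bad'' — so that a generic point of the corner locus avoids it. Concretely, fixing the combinatorial type of the maximizing face (constant on the relative interior of $C$) makes $L=L(\gamma)$ an affine function of $\gamma$, and for two elements $k,k'$ lying strictly below that face the ``tie'' condition is the vanishing of the explicit affine functional $\gamma\mapsto(\gamma-L(\gamma)\circ A)(k)-(\gamma-L(\gamma)\circ A)(k')$; this functional is invariant under adding an affine function of $A$ to $\gamma$ (so it descends modulo the lineality of the secondary fan), and one checks that it does not vanish identically on $C$ — equivalently that its coefficient vector is not proportional to the circuit relation supported on $I$, which cuts out the hyperplane spanned by $C$; the obstruction is a vertex of the maximizing face that is not a vertex of $\conv(A(I))$. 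Hence its zero set meets $C$ in a proper subcone, and a finite union over faces, pairs $k,k'$ and walls gives the bad set. \textbf{I expect this genericity bookkeeping to be the main obstacle}: it is the only place that requires anything beyond quoting \cite{GKZ}, precisely because $L$ varies with $\gamma$, so one must rule out accidental coincidences forced by the geometry of a wall.

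Finally, for the last bullet: since the secondary fan is complete and $C$ is one of its facets, $C$ is shared by exactly two maximal cones $C_+,C_-$, so a small neighbourhood $U$ of $\gamma$ meets only $C_+$, $C_-$ and $C$. On $C_\pm$ the basecondary function $\mathscr B_F$ is linear by Remark~\ref{rem:sum_simplicial} and formula~\eqref{eq:sum_simplicial} — say it equals the affine function $\ell_\pm$ there — and $\mathscr B_F$ is continuous by Definition~\ref{basec}; hence on $U$ it coincides with whichever of $\max(\ell_+,\ell_-)$, $\min(\ell_+,\ell_-)$ agrees with its values on $C$, which is a tropical binomial (degenerating to a monomial precisely when $\ell_+=\ell_-$, i.e. when $\mathscr B_F$ is smooth at $\gamma$). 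This completes the plan.
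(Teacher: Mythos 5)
The paper does not actually prove Lemma~\ref{lemma:A}: it only writes ``The following can be seen e.g. from \cite{GKZ}'' and moves on, so there is no argument of the authors' to compare yours against. Your proposal fills in, correctly, the standard secondary-fan material that this citation is gesturing at (walls of the secondary fan correspond to coarsest non-trivial regular subdivisions whose unique non-simplex cell determines the circuit $I$, and the two adjacent maximal cones give the two linear pieces of $\mathscr B_F$), so in that sense your route is the intended one.

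Two remarks on the places you yourself flag as delicate. For the genericity step, your stated ``obstruction'' (a vertex of the maximizing face outside $\conv(A(I))$) is not the cleanest diagnosis and is not always the right one (for a $\gamma$-circuital $L$ with $|I|=n+2$, the maximizing set is exactly $I$). A cleaner way to see that no tie functional can vanish identically on the relative interior of the wall $C$: if $L$ is $\gamma$-circuital with maximizing set $I_\alpha\supset I$ and $k,k'\notin I_\alpha$, then perturbing $\gamma$ only in the coordinate $\gamma(k)$ leaves the circuit relation (supported on $I$, hence not involving $k$) and, for small perturbations, the subdivision unchanged, so one stays in $C$ while the tie functional changes linearly — hence its zero set is a proper subcone of $C$. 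The same perturbation argument handles $\gamma$-simplicial $L$ with maximizing simplex $T$ once one notes $T\not\subset I$. Second, you invoke continuity of $\mathscr B_F$ ``by Definition~\ref{basec}'' to upgrade piecewise-linearity on $C_\pm$ to a tropical binomial; this is not immediate from Definition~\ref{basec}, since both factors in each summand jump as $\gamma$ crosses $C$, and the matching of the two linear pieces along $C$ is precisely the nontrivial cancellation that Lemma~\ref{lemma:B} and the computation in Subsection~\ref{ssproof} are designed to track. It would be more honest either to cite that computation or to verify directly that the two one-sided limits of formula~\eqref{eq:sum_simplicial} along $C$ agree.
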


Consider a linear family $\gamma_t$ such that $\gamma_0$ is as described in Lemma \ref{lemma:A} and for any $t\neq 0,~\gamma_t$ is generic (see Definition \ref{def:gen_gamma}).

\begin{lemma}\label{lemma:B}
Let $L$ be $\gamma$-circuital with $\gamma=\gamma_0,$ and let the simplex $\widetilde{\gamma}^{L}[1,\ldots,p]$ be a face of $\conv((A,\gamma_t)(\overline{m}))$ for some negative $t.$ Then the following equality holds for every $j>n+2:$ 
\begin{multline}\label{eq:volumes}
(-1)^{n+1}\Big\langle\widetilde\gamma_t^L [1,\ldots,n+2] \Big\rangle=\vol\Big(\conv((A,\gamma_t)[1,\ldots,n+2])\Big)=\\=\sum_{\substack{L_+\in(\R^n)^*\\ \textrm{is }\gamma_{-t}\textrm{-simplicial}}}\sum_{i=1}^p (-1)^{i+1}\Big\langle(A,\gamma_t)\circ\gamma_{-t}^{L_+}\big([1,\ldots,n+2,j]\setminus i\big)\Big\rangle+\\+
\sum_{\substack{L_-\in(\R^n)^*\\ \textrm{is }\gamma_{t}\textrm{-simplicial}}}\sum_{i=p+1}^{p+q} (-1)^{i+1} \Big\langle\widetilde\gamma_t^{L_-}\big([1,\ldots,n+2,j]\setminus i\big)\Big\rangle.     
\end{multline}
\end{lemma}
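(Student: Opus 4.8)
The claim combines an equality of oriented with unsigned volumes and an equality of the latter with a signed sum of oriented volumes; I would treat the two equalities separately, both by volume-decomposition arguments localized at the circuit supplied by Lemma \ref{lemma:A}. The first equality $(-1)^{n+1}\langle\widetilde\gamma_t^L[1,\ldots,n+2]\rangle=\vol(\conv((A,\gamma_t)[1,\ldots,n+2]))$ is a pure sign computation: the ordering of the first $n+2$ points is fixed (up to the ambiguity noted after Definition \ref{def:perm_circuit}) by condition 3 there, and the hypothesis that $\widetilde\gamma^L[1,\ldots,p]$ is a face of $\conv((A,\gamma_t)(\overline{m}))$ for negative $t$ determines on which side of the affine hull of the circuit the remaining vertices sit; combined with the orientation of $\R^{n+1}$ induced by the last coordinate, this yields the sign $(-1)^{n+1}$. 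I would dispatch this first, since it also fixes the orientation conventions used below.

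For the second equality set $\Sigma_t:=\conv((A,\gamma_t)[1,\ldots,n+2])$, an $(n+1)$-simplex. The shear of $\R^{n+1}$ subtracting $L\circ A$ and a constant from the last coordinate trivializes $\gamma_0$ on $[1,\ldots,n+2]$ and preserves $(n+1)$-volume; after it, the facets of $\Sigma_t$ split into those whose projection to $\R^n$ is full-dimensional (these form the lower and the upper hull with respect to the last coordinate) and the vertical ones, and by Remark \ref{rem:pq} the facet omitting the $i$-th point is vertical exactly when $i\notin J_0$, i.e. $i>p+q$. Among the remaining facets, those with $i\le p$ form one hull and those with $p+1\le i\le p+q$ the other — here the face hypothesis is used again, to say which is ``upper'' for negative $t$. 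Hence, for an arbitrary auxiliary point $w\in\R^{n+1}$, writing $\vol(\Sigma_t)$ as the difference of the cone volumes from $w$ over the two hulls gives $\vol(\Sigma_t)=\sum_{i=1}^{p}(-1)^{i+1}\langle w,(A,\gamma_t)([1,\ldots,n+2]\setminus i)\rangle=\sum_{i=p+1}^{p+q}(-1)^{i+1}\langle w,(A,\gamma_t)([1,\ldots,n+2]\setminus i)\rangle$. The freedom in $w$ is precisely the index $j$, and this is why the right-hand side of the lemma does not depend on $j$.

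It then remains to recognize each single-simplex expression as the stated sum over simplicial $L$. The simplex $\conv(w\cup(A,\gamma_t)([1,\ldots,n+2]\setminus i))$, with $w$ the lift of the $j$-th point, is the cell ``on the circuit side'' of the regular triangulation of $\conv(A[1,\ldots,n+2,j])$ induced by $\gamma_{-t}$ (for $i\le p$), respectively by $\gamma_t$ (for $p+1\le i\le p+q$); and the full alternating sum over $\gamma_{-t}$- respectively $\gamma_t$-simplicial maps $L$, with $i$ in the indicated range, reconstitutes that regular triangulation. By Lemma \ref{lemma:A} the $\gamma_t$- and $\gamma_{-t}$-regular triangulations differ only by the single bistellar flip at the circuit, so every cell away from the circuit appears identically in both and contributes $0$ to the relevant alternating sum, leaving only the circuit cells; the choice of lift ($(A,\gamma_t)$ versus $\widetilde\gamma_t^{L}=(A,\gamma_t)\circ\gamma_t^{L}$) and of index range selects one of the two flip triangulations, and the signs $(-1)^{i+1}$ are the induced facet orientations from Definitions \ref{def:perm_generic} and \ref{def:perm_circuit}.

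The main obstacle will be exactly this last bookkeeping step: matching the terms of the sum \eqref{eq:sum_simplicial} over simplicial $L$ with the cells of the two regular triangulations of $\conv(A[1,\ldots,n+2,j])$, and checking that the orientation data packaged in Definitions \ref{def:perm_generic}--\ref{def:perm_circuit} produce exactly the signs $(-1)^{i+1}$; one must also handle the case $p+q<n+2$ (extra vertices of $J$ outside the circuit), where it has to be verified that the cells which are joins with these extra vertices cancel correctly. By contrast, the geometric core — that the alternating sums compute $\vol(\Sigma_t)$ and are independent of the apex $w$ — is soft.
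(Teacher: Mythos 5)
The paper states Lemma \ref{lemma:B} without a proof (the attribution ``can be seen e.g.\ from \cite{GKZ}'' just above attaches to Lemma \ref{lemma:A} only), so there is no authors' argument to compare yours against. Judged on its own, your high-level picture --- triangulations of $\conv(A[1,\ldots,n+2,j])$, the bistellar flip at the circuit, signed cone decompositions --- is the natural toolkit, but the pivotal step in your second paragraph is incorrect and the compensating mechanism is never actually supplied.

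You assert that for an arbitrary auxiliary $w\in\R^{n+1}$,
\[
\vol(\Sigma_t)=\sum_{i=1}^{p}(-1)^{i+1}\langle w,(A,\gamma_t)([1,\ldots,n+2]\setminus i)\rangle=\sum_{i=p+1}^{p+q}(-1)^{i+1}\langle w,(A,\gamma_t)([1,\ldots,n+2]\setminus i)\rangle.
\]
Neither equality is true. The apex identity that holds for every $w$ is the alternating sum over \emph{all} $n+2$ facets, $\langle v_1,\ldots,v_{n+2}\rangle=\sum_{i=1}^{n+2}(-1)^{i+1}\langle w,v_1,\ldots,\hat v_i,\ldots,v_{n+2}\rangle$; any sum over a proper subset of facets depends on $w$. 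A one-line check with $n=1$, $A(1)=0$, $A(2)=2$, $A(3)=1$, $\gamma_t=(0,0,t)$, $p=2$, $q=1$, $w=(1,10)$ gives $\sum_{i\le p}(-1)^{i+1}\langle w,\cdot\rangle=t-10$, which is neither $\vol(\Sigma_t)=|t|$ nor independent of $w$. And the two displayed partial sums cannot both equal $\vol(\Sigma_t)$ for the same $w$ unless the omitted indices $i>p+q$ (and the discrepancy between the two hulls) conspire exactly --- which is precisely what would have to be proved. Since this equality is what is supposed to reduce the lemma to matching simplices of two regular triangulations, this is a genuine gap, not a bookkeeping detail.

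There is a second, related gap: you claim cells away from the circuit ``contribute $0$'', but that would require the $\gamma_t$- and $\gamma_{-t}$-triangulations to appear with opposite signs, whereas in \eqref{eq:volumes} the two $L$-sums are \emph{added} with the same sign pattern $(-1)^{i+1}$ over the disjoint ranges $i\le p$ and $p+1\le i\le p+q$. Any cancellation has to come from a careful index-by-index comparison (keeping track of which elements of $I_\alpha$ the permutations $\gamma_{\pm t}^{L_\pm}$ place in each slot, and of the fixed $j$, which is not a free apex), not from the flip by itself. Together with the $i>p+q$ case you flag at the end, this comparison is the actual content of the lemma; the proposal defers it to ``bookkeeping'' and so does not establish the statement.
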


Let $\gamma$ and $I$ be as in Lemma \ref{lemma:A}, and let $L$ be one of the $\gamma$-circuital maps (see Definition \ref{def:circ_L}). Then the set $I$ is contained in the maximum locus of the function $\gamma-L\circ A.$ 

Without loss of generality, assume that $I=\{1,\ldots,p+q\}\subset\overline{m}.$ Denote all $\gamma$-circuital maps by $L_{\alpha},~\alpha\in\Lambda,$ the maximum loci of $\gamma-L_{\alpha}\circ A$ by $I_\alpha\supset I,$ respectively. Note that by Definition \ref{def:circ_L}, each of the sets $I_\alpha$ has cardinality $n+2.$

\subsection{The proof of Theorem \ref{mainth}}\label{ssproof} In what follows, we will deduce the sufficient conditions on the function $F$ for the basecondary function $\mathscr{B}_F$ to be convex on the intersections of adjacent full-dimensional cones of the secondary fan. 

Let $C_-$ and $C_+$ be the full-dimensional cones of the $A$-secondary fan containing $\gamma_t$ and $\gamma_{-t}$ respectively (and thus we have $\gamma_0\in C_-\cap C_{+}$). 

We need to make sure that the difference $\mathscr{B}_F\mid_{C_-}(\gamma_t)-\mathscr{B}_F\mid_{C_+}(\gamma_{t})$ for $-1\ll t<0,$ (where $\mathscr{B}_F\mid_{C_+}(\gamma_{t})$ is understood as the linear continuation from small positive $t$) is non-positive. 

Plugging the expression from Remark {\ref{rem:sum_simplicial}}, we observe that all terms cancel, except for those for $L=L_\alpha$. Collecting the surviving terms, we denote them by
$$\mathscr{B}_F\mid_{C_-}(\gamma_t)-\mathscr{B}_F\mid_{C_+}(\gamma_{t})=\sum_{\alpha\in\Lambda}\mathcal C_\alpha,
$$
that is, there will be a summand $\mathcal C_{\alpha}$ for every $\gamma$-circuital $L_{\alpha},~\alpha\in\Lambda.$ To guarantee that the sum $\sum_{\alpha\in\Lambda}\mathcal C_{\alpha}$ is non-positive, it is sufficient to make sure each of the summands $\mathcal C_{\alpha}$ is non-positive.

Now fix an arbitrary $\alpha\in\Lambda.$ Let $L_{\alpha}$ and $I_{\alpha}\subset\overline{m}$ be the corresponding  $\gamma$-circuital map and the maximum locus, respectively. Without loss of generality, we can assume that $I_{\alpha}=[1,\ldots,n+2]\subset\overline{m},$ and the corresponding permutation $\gamma^{L_{\alpha}}$ sends each $j\in I_{\alpha}$ to itself. 

\begin{rem}
Let $L_-$ be a generic $\gamma_t$-simplicial map for some $1\ll t<0.$ Then the function $\gamma_t-L_-\circ A$ attains its maximal value at $I_{\alpha}\setminus k$ for some $p+1\leqslant k\leqslant p+q.$ The corresponding permutation $\gamma_t^{L_-}\colon\overline{m}\to\overline{m}$ maps the set $[1,\ldots,n-1]$ to $I_{\alpha}\setminus k,$ moreover, $\gamma_t^{L_-}(n+2)=k$ and for $j>n+2,$ we have $\gamma_t^{L_-}(j)=\gamma^{L_{\alpha}}(j).$ 

A similar observation is true for a generic $\gamma_{-t}$-simplicial map $L_+.$ The function $\gamma_{-t}-L_+\circ A$ attains its maximal value at $I_{\alpha}\setminus k$ for some $1\leqslant k\leqslant p.$ The corresponding permutation $\gamma_{-t}^{L_+}\colon\overline{m}\to\overline{m}$ maps the set $[1,\ldots,n-1]$ to $I_{\alpha}\setminus k,$ moreover, $\gamma_{-t}^{L_+}(n+2)=k$ and for $j>n+2,$ we have $\gamma_{-t}^{L_+}(j)=\gamma^{L_{\alpha}}(j).$ 
\end{rem}

Then for any $-1\ll t<0,$ we have:

\begin{multline*}
\mathcal C_{\alpha}=\sum_{\substack{L_-\in(\R^n)^*\\ \textrm{is }\gamma_{t}\textrm{-simplicial}}}\sum_{k=p+1}^{p+q}\Bigg[(-1)^{k+1}\Big\langle\widetilde{\gamma}_t^{L_-}[1,\ldots,\hat{k},\ldots,n+2,k]\Big\rangle\Big(F\big(I_{\alpha}\setminus k\big)-F\big(I_{\alpha}\big)\Big)+\\+\sum_{j=n+3}^{m}(-1)^{k+1}\Big\langle\widetilde{\gamma}_t^{L_-}[1,\ldots,\hat{k},\ldots,n+2,j]\Big\rangle\cdot\\ \cdot\Big(F\big(I_{\alpha}\cup\gamma^{L_{\alpha}}([n+3,\ldots,j-1])\big)-F\big(I_{\alpha}\cup\gamma^{L_{\alpha}}([n+3,\ldots,j])\big)\Big)\Bigg]-\\-\sum_{\substack{L_+\in(\R^n)^*\\ \textrm{is }\gamma_{-t}\textrm{-simplicial}}}\sum_{k=1}^{p}\Bigg[(-1)^{k}\Big\langle(A,\gamma_t)\circ \gamma_{-t}^{L_+}[1,\ldots,\hat{k},\ldots,n+2,k]\Big\rangle\Big(F\big(I_{\alpha}\setminus k\big)-F\big(I_{\alpha}\big)\Big)+\\+\sum_{j=n+3}^{m}(-1)^{k}\Big\langle(A,\gamma_t)\circ \gamma_{-t}^{L_+}[1,\ldots,\hat{k},\ldots,n+2,j]\Big\rangle\cdot\\ \cdot\Big(F\big(I_{\alpha}\cup\gamma^{L_{\alpha}}([n+3,\ldots,j-1])\big)-F\big(I_{\alpha}\cup\gamma^{L_{\alpha}}([n+3,\ldots,j])\big)\Big)\Bigg].
\end{multline*}

Using formula (\ref{eq:perm_circuit}) from Definition \ref{def:perm_circuit} and Lemma \ref{lemma:B}, we can simplify the expression above as follows: 

\begin{multline*}
\mathcal C_{\alpha}=\sum_{k=1}^{p+q}\Bigg[(-1)^{k+1+n+2-k}\Big\langle\widetilde{\gamma}_t^{L_{\alpha}}[1,\ldots,n+2]\Big\rangle\Big(F\big(I_{\alpha}\setminus k\big)-F\big(I_{\alpha}\big)\Big)\Bigg]+\\+(-1)^{n+1}\Big\langle\widetilde\gamma_t^{L_{\alpha}} [1,\ldots,n+2] \Big\rangle\Big(F\big(I_{\alpha}\big)-F\big(\overline{m}\big)\Big)=\\=(-1)^{n+1}\Big\langle\widetilde\gamma_t^{L_{\alpha}} [1,\ldots,n+2] \Big\rangle\Big(\sum_{k=1}^{p+q}F\big(I_{\alpha}\setminus k\big)-(p+q-1)F\big(I_{\alpha}\big)-F\big(\overline{m}\big)\Big)=\\=\vol\Big(\conv((A,\gamma_t)[1,\ldots,n+2])\Big)\Big(\sum_{k=1}^{p+q}F\big(I_{\alpha}\setminus k\big)-(p+q-1)F\big(I_{\alpha}\big)-F\big(\overline{m}\big)\Big)=\\=\vol\Big(\conv((A,\gamma_t)I_{\alpha})\Big)\Big(\sum_{k\in I}F\big(I_{\alpha}\setminus k\big)-(p+q-1)F\big(I_{\alpha}\big)-F\big(\overline{m}\big)\Big).
\end{multline*}

{\it Proof of Theorem \ref{mainth}.1.}
It is enough to prove that $\mathscr{B}_F$ is convex in the interior of every full dimensional cone of the $A$-secondary fan. Let $C_\mathcal{T}$ be such a cone, corresponding to the triangulation $\mathcal{T}$ of the configuration $A(\bar m)$ with the maximal simplices $A(T),\, T\in \mathcal{T}$.
Denote 

-- the Lov\'{a}sz function (Definition \ref{lovaszdef}) for $F$ (arbitrarily extended to a submodular function on the whole $2^{\bar m}$) by ${\mathcal F}:\R^m\to\R$; 

-- the linear map sending $\gamma\in\R^m$ to the tuple of volumes of $(n+1)$-simplices $\bigl(A,\gamma\bigr)(T\cup\{k\})\subset\R^{n+1},\,k\in\bar m$, by $\bar T:\R^m\to\R^m$.

The restriction of $\mathscr{B}_F$ to the cone $C_\mathcal{T}$ equals the convex function $\sum_{T\in\mathcal{T}} {\mathcal F}\circ\bar T$. $\hfill\square$

{\it Proof of Part 2.} It is enough to prove that $\mathscr{B}_F$ is convex outside a codimension 2 polyhedral complex. We shall prove this for the codimension 2 skeleton of the $A$-secondary fan. 

This amounts to verifying convexity of $\mathscr{B}_F$ in the relative interior of codimension 1 and full dimensional secondary cones. For codimension 1 cones, the convexity is proved by the reasoning of Subsection \ref{ssproof} above. For full dimensional cones, see the proof of Part 1.  $\hfill\square$

\section{The Newton polytope of the Morse Discriminant revisited}\label{sec:Morse_disc}
The rest of the paper is devoted to representing the Newton polytope $\mathcal M_A$ of the Morse Discriminant as the Minkowski sum of an iterated fiber simplex, the basecondary polytope and a multiple of the secondary polytope. This section is organized as follows. We will first reformulate the main result of \cite{V23}, which is a formula for the support function $\mu_A$ of the polytope $\mathcal M_A,$ in the notation of Section \ref{sec:basecondary}. Then we will represent one if its summands as an iterated fiber simplex.

\subsection{The support function of the polytope \texorpdfstring{$\mathcal M_A$}{}}
We fix a set $A=\{a_1<\ldots<a_m\}\subset \Z\setminus\{0\}$ affinely generating $\Z$ and by $\C^A$ denote the space of univariate Laurent polynomials with support $A.$

\begin{defin}\label{def:caustic}
The {\it caustic} in $\C^A$ is the set of all $f\in\C^A$ such that the map $f\colon\CC\to\C$ has a degenerate critical point. 
\end{defin}

\begin{defin}\label{def:ms}
The {\it Maxwell stratum}  in $\C^A$ is the set of all $f\in\C^A$ such that the map $f\colon\CC\to\C$ has a pair of coinciding critical values taken at distinct points.
\end{defin}

\begin{defin}\label{def:Morse_pol}
A polynomial $f\in\C^A$ is called {\it Morse} if it belongs neither to the caustic nor to the Maxwell stratum.
\end{defin}

\begin{defin}\label{def:Morse_disc}
The {\it Morse discriminant} is the closure of the set of all non-Morse polynomials $f\in\C^A.$ It is given by the polynomial $h_m^2h_c,$ where $h_m$ and $h_c$ are polynomials defining the Maxwell stratum and the caustic respectively if these two sets are hypersurfaces. Otherwise we set the corresponding defining polynomial to $1$. 
\end{defin}

Before we write down the formula for the support function $\mu_A$ of the Newton polytope $\mathcal M_A$ of the Morse discriminant, let us introduce a bit of notation. 

A covector $\gamma\in(\R^A)^*$ can be viewed as a function $\gamma\colon A\to\R.$ 
\begin{defin}\label{N_and_delta}
Let $\gamma\in(\R^A)^*$ be a covector with non-negative coordinates. 
With $\gamma$ we associate the following polytopes: 

$$N_{\gamma}=\conv(\{(a,0)\mid a\in A\}\cup\{(a,\gamma(a))\mid a\in A\})\subset\R\langle e_1,e_3\rangle,$$
and 
$$\overline{\Delta}_{\gamma}=\conv(\{(a,0,0)\mid a\in A\}\cup\{(a,0,\gamma(a))\mid a\in A\}\cup\{0,1,0\})\subset\R\langle e_1,e_2,e_3\rangle.$$
\end{defin}

\begin{defin}\label{P_gamma}
For a given covector $\gamma\in(\R^A)^*$  with non-negative coordinates, the polygon $\overline{P}_{\gamma}\subset\R^2$ is defined as the Minkowski integral $\int \pi_1|_{\overline{\Delta}_{\gamma}}$ with respect to the projection $\pi_1\colon\R^3\twoheadrightarrow\bigslant{\R^3}{\langle e_2,e_3\rangle}.$
\end{defin}

In the notation of Section \ref{sec:basecondary}, let $A\colon\overline m\to\Z$ be the map sending $i\in \overline{m}$ to $a_i\in \Z,$ and $F\colon 2^{\overline{m}}\to\R$ be the function mapping $\{b_1,\ldots,b_k\}$ to $-\gcd(A(b_1),\ldots,A(b_k))$ and the empty set to $0$. Finally, a function $\gamma\colon\{a_1,\ldots,a_m\}\to\R$ can be viewed as a composition of a map $\gamma\colon\overline{m}\to\R$ and the map $A\colon\overline m\to\Z.$
Then Theorem 3.14 of \cite{V23} can be rewritten as follows. 

\begin{theor}[\cite{V23}]\label{theor:Morse_support}
Up to a shift, the support function of the Newton polytope of the Morse discriminant can be computed via the following formula: 
\begin{equation}\label{eq:Morse_support}
\mu_A(\gamma)=\area(\overline{P}_{\gamma})+\mathscr B_F(\gamma)-3\area(N_{\gamma}).
\end{equation}
\end{theor}

The second and the third summands of (\ref{eq:Morse_support}) are the basecondary function $\mathscr B_{-\gcd}$ and a multiple of the support function of the secondary polytope $\mathcal S_A.$ We will now discuss the first summand of (\ref{eq:Morse_support}).
\subsection{The first summand}
\begin{defin}
A polytope $Q\subset\R^{n}$ is called {\it homogeneous}, if it lies in an affine hyperplane perpendicular to the vector $(1,\ldots,1).$
\end{defin}

Below we will construct the polytope in $\R^A$ which is uniquely defined by the following two properties: it is homogeneous, and its support function restricted onto the set of covectors in $(\R^A)^*$ with non-negative coordinates coincides with the first summand in formula (\ref{eq:Morse_support}). 

Choose a basis $(e_1,e_2,\alpha_1,\ldots,\alpha_n)$ in $\Z^{n+2}$ and consider the simplex $$\Omega_A=\conv(\{a_i\cdot e_1+\alpha_i\mid 1\leqslant i\leqslant n\}\cup\{e_2\})\subset\R^{n+2}.$$

Equivalently, the simplex $\Omega_A\subset\R^{n+2}$ can be defined as the Newton polytope of the polynomial $c_0+\sum_{a\in A} c_a x^a\in\C[x,c_0; c_a\mid a\in A].$ 

By $\mathcal S_{A\cup 0}$ we will denote its Minkowski integral $\int \rho_1|_{\Omega_A}\subset\R^{n+1}$ with respect to the projection $\rho_1\colon\R^{n+2}\twoheadrightarrow\bigslant{\R^{n+2}}{\langle e_2,\alpha_1,\ldots,\alpha_n\rangle}$ (\cite{bs0}). It is well-known that this polytope is equal to the secondary polytope for the set $A\cup\{0\}.$

Taking the Minkowski integral $\int \rho_2|_{\mathcal S_{A\cup 0}}$ of the polytope $\mathcal S_{A\cup 0}$ with respect to the projection $\rho_2\colon\R^{n+1}\twoheadrightarrow\bigslant{\R^{n+1}}{\langle \alpha_1,\ldots,\alpha_n\rangle}$ we obtain the polytope $\Sigma_A\subset\R^A.$

\begin{lemma}\label{lemma:iterated}
In the same notation as above, the polytope $\Sigma_A\subset\R^A$ is the homogeneous polytope whose support function restricted to the set of covectors in $(\R^A)^*$ with non-negative coordinates coincides with the first summand in formula (\ref{eq:Morse_support}).
\end{lemma}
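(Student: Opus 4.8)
The plan is to unwind the two iterated Minkowski integrals and show that the resulting support function matches the first summand $\area(\overline P_\gamma)$ of \eqref{eq:Morse_support}, while the homogeneity is automatic from the construction. The key general fact is the \emph{fiber polytope / Minkowski integral formula}: if $Q\subset\R^{k}$ is a polytope and $\pi\colon\R^k\twoheadrightarrow\R^k/V$ is a linear projection, then the support function of $\int\pi|_Q$ at a covector $\xi$ (on the quotient) equals $\int_{\pi(Q)}\max_{y\in Q,\,\pi(y)=x}\langle\xi,y\rangle\,dx$, i.e.\ the integral over the base of the $\xi$-support of the fiber. Since the first two integrals $\int\rho_1|_{\Omega_A}$ and $\int\rho_2|_{\mathcal S_{A\cup0}}$ are taken with respect to compatible projections whose combined kernel is $\langle e_2,\alpha_1,\dots,\alpha_n\rangle$, I would first record that $\Sigma_A$ equals a single iterated fiber polytope of $\Omega_A$ over $\R^A=\R^{n+2}/\langle e_2,\alpha_1,\dots,\alpha_n\rangle$ (this is the "iterated fiber simplex" alluded to after the statement, cf.\ \cite{bs}); homogeneity of $\Sigma_A$ follows because $\Omega_A$ lies in the hyperplane $\{$sum of $e_1,e_2,\alpha$-coordinates $=1\}$ and the projections respect this.

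Next I would compute the support function of $\Sigma_A$ explicitly on a covector $\gamma\in(\R^A)^*$ with non-negative coordinates. Lifting $\gamma$ to $\R^{n+2}$ by declaring it $0$ on $e_2,\alpha_1,\dots,\alpha_n$, the iterated fiber-polytope formula gives
\[
h_{\Sigma_A}(\gamma)=\int_{\rho(\Omega_A)}\Big(\max_{\,y\in\Omega_A,\ \rho(y)=x}\langle\gamma,y\rangle\Big)\,dx,
\]
where $\rho\colon\R^{n+2}\to\R$ is the composite projection onto the $e_1$-line (the span of $A$). The fiber of $\Omega_A$ over a point $x$ in the span of $A$ is, by the defining description of $\Omega_A$ as the Newton polytope of $c_0+\sum_a c_a x^a$, exactly the convex hull $\overline\Delta_\gamma$-type slice: its $\gamma$-support as a function of $x$ is the upper envelope built from the points $(a,\gamma(a))$ together with the apex $e_2$ contributing $0$. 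I would check that this upper-envelope function of $x$ is precisely the one whose integral over $[\,a_1,a_m\,]$ computes $\area(\overline P_\gamma)$ — that is, I unravel Definitions \ref{N_and_delta}, \ref{P_gamma} and observe that $\overline P_\gamma=\int\pi_1|_{\overline\Delta_\gamma}$ is the fiber polygon of the very same simplex (after identifying $\Omega_A$ with $\overline\Delta_\gamma$ under the coordinate change $\alpha_i\mapsto 0$, $e_2\mapsto(0,1,0)$), so its area is the integral of the fiber support, matching the display above.

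Then I would invoke uniqueness: a homogeneous polytope in $\R^A$ is determined by its support function on the positive orthant of covectors (the positive orthant spans, and translates along $(1,\dots,1)$ are the only ambiguity, which homogeneity kills), so $\Sigma_A$ is \emph{the} homogeneous polytope with the prescribed support function, establishing the lemma. The main obstacle I anticipate is purely bookkeeping: carefully matching the normalizations and coordinate identifications between the three successive Minkowski integrals $\Omega_A\to\mathcal S_{A\cup0}\to\Sigma_A$ on one side and the two-step construction $\overline\Delta_\gamma\to\overline P_\gamma$ on the other, making sure the apex vertex $e_2$ (resp.\ $(0,1,0)$) and the auxiliary directions $\alpha_i$ play matching roles, and that the claim $\mathcal S_{A\cup0}=$ (secondary polytope of $A\cup\{0\}$) is used only as a consistency check rather than in the actual computation. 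Once the fiber-polytope formula is set up with the right projections, the equality of the two integrals should be essentially a change of variables.
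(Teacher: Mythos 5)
Your plan contains a misidentification that undoes the whole argument. You claim to "identify $\Omega_A$ with $\overline\Delta_\gamma$" and then compute $\area(\overline P_\gamma)$ from the fiber support of $\Omega_A$. But under the projection $\varphi_\gamma$ (which sends $e_1\mapsto e_1$, $e_2\mapsto e_2$, $\alpha_i\mapsto\gamma_i e_3$) the image of $\Omega_A$ is only the pyramid $\Delta_\gamma=\conv(\{(a,0,\gamma(a))\}\cup\{(0,1,0)\})$, which is a \emph{proper subset} of $\overline\Delta_\gamma$: the extra floor vertices $(a,0,0)$ of $\overline\Delta_\gamma$ come from the monomials $x^{a_1},x^{a_n}$ in $\overline\Omega_A$, not from $\Omega_A$. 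Consequently the chain $\varphi_\gamma(\Omega_A)=\Delta_\gamma\to P_\gamma\to I_\gamma$ produces a smaller polygon $P_\gamma\subsetneq\overline P_\gamma$ and a smaller interval $I_\gamma\subsetneq\overline I_\gamma$. The paper therefore splits the lemma in two: Proposition \ref{prop:non_negative} handles the bar-versions (support function of $\overline\Sigma_A$ equals $\area(\overline P_\gamma)$, by the straightforward commutation of Minkowski integration with $\varphi_\gamma$), and then the proof of Lemma \ref{lemma:iterated} needs the genuinely new observation that $P_\gamma$ and $\overline P_\gamma$, while different, share the same upper boundary (the common fiber edges of the facets $\conv\{(w_j,0,\gamma(w_j)),(w_{j+1},0,\gamma(w_{j+1})),(0,1,0)\}$), so that $\max_{\tau}J_\xi=\max_\tau\overline J_\xi$ fiberwise and hence the right endpoints of $I_\gamma$ and $\overline I_\gamma$ coincide. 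Your proposal has no substitute for this step.

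A second, smaller concern: you write the support function of $\Sigma_A$ as a single integral $\int_{\rho(\Omega_A)}\max_{y\in\Omega_A,\ \rho(y)=x}\langle\gamma,y\rangle\,dx$ over the \emph{composite} projection. But $\Sigma_A$ is an iterated Minkowski integral $\int\rho_2|_{\int\rho_1|_{\Omega_A}}$, and as \cite{bs} emphasizes, iterated fiber polytopes are in general only Minkowski \emph{summands} of the single-step fiber polytope over the composed projection, not equal to it; so this one-line formula is not available without justification. The paper sidesteps this entirely by never invoking such a formula, using only commutativity of Minkowski integration with linear projections and a direct comparison of $I_\gamma$ with $\overline I_\gamma$. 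Your homogeneity argument (from $\Omega_A$ lying in a hyperplane and the projections respecting it) is fine and in the spirit of the paper, but the core of the lemma is missing.
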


Before we prove Lemma \ref{lemma:iterated}, let us introduce a bit more notation. 

By $\overline{\Omega}_A\subset\R^{n+2}$ we denote the Newton polytope of the polynomial $c_0+\sum_{a\in A} c_a x^a+x^{a_1}+x^{a_n}\in\C[x,c_0; c_a\mid a\in A].$ It is clear from the definition that we have the inclusion $\Omega_A\subset\overline{\Omega}_A.$ 

By $\overline{\mathcal S}_{A\cup 0}$ we denote the Minkowski integral $\int \rho_1|_{\overline{\Omega}_A},$ and by $\overline{\Sigma}_{A}$ we denote the polytope $\int \rho_2|_{\overline{\mathcal S}_{A\cup 0}}.$ We also have the inclusions $\mathcal S_{A\cup 0}\subset \overline{\mathcal S}_{A\cup 0}$ and $\Sigma_{A} \subset\overline{\Sigma}_{A}.$ 

With a given covector $\gamma\in(\R^n)^*$ having non-negative entries we associate the projection $\varphi_{\gamma}\colon \R^{n+2}\to\R^3=\R\langle e_1,e_2,e_3\rangle$ defined as follows: $e_1\mapsto e_1, e_2\mapsto e_2, \alpha_i\mapsto \gamma_i\cdot e_3,~1\leqslant i\leqslant n.$

\begin{utver}\label{prop:non_negative}
The support function of the polytope $\overline{\Sigma}_A$ restricted to the set of covectors in $(\R^A)^*$ with non-negative coordinates coincides with the first summand in formula (\ref{eq:Morse_support}). 
\end{utver}

\begin{proof}
First, let us note that for any covector $\gamma$ with non-negative coordinates, the image of $\overline{\Omega}_A$ under the projection $\varphi_{\gamma}$ is a pyramid $\overline{\Delta}_{\gamma}$ from Definition \ref{N_and_delta}. 

The first summand in formula (\ref{eq:Morse_support}) is the area of the fiber polygon $\overline{P}_{\gamma}=\int\pi_1|_{\overline{\Delta}_{\gamma}}$ of $\overline{\Delta}_{\gamma}$ with respect to the projection $\pi_1\colon\R^3\twoheadrightarrow\bigslant{\R^3}{\langle e_2,e_3\rangle}$. 

Allowing ourselves a slight abuse of notation, we denote the projection  $\R^{n+1}\twoheadrightarrow \R^2,$ defined by $e_2\mapsto e_2, \alpha_i\mapsto \gamma_i\cdot e_3,~1\leqslant i\leqslant n,$ by the same symbol $\varphi_{\gamma}$. 

Since taking the Minkowski integral with respect to $\rho_1$ (or $\pi_1$) commutes with the projection $\varphi_{\gamma}$, we can represent the polygon $\overline{P}_{\gamma}$ as the image of $\overline{\mathcal S}_{A\cup 0}$ under the projection $\varphi_{\gamma}.$ 

The Minkowski integral $\overline{I}_{\gamma}=\int\pi_2|_{\overline{P}_{\gamma}}\subset \R$ of the polygon $\overline{P}_{\gamma}$ with respect to the projection $\pi_2\colon\R^2\twoheadrightarrow\bigslant{\R^2}{\langle e_3\rangle}$ is the interval $[0,\area(\overline{P}_{\gamma})].$

Finally, since the Minkowski integration with respect to $\rho_2$ (or $\pi_2$) commutes with the projections $\varphi_{\gamma},$ one can obtain the interval $\overline{I}_{\gamma}\subset\R\langle e_3\rangle$ as the image of the polytope $\overline{\Sigma}_A$ under the projection $\varphi_{\gamma}\colon\R^n\to\R$ defined by $\alpha_i\mapsto \gamma_i\cdot e_3$ (here we also use a similar abuse of notation). Therefore, we have $$[0,\area(\overline{P}_{\gamma})]=\overline{I}_{\gamma}=[\min_{y\in \overline{\Sigma}_A}(\gamma(y)),\max_{y\in \overline{\Sigma}_A}(\gamma(y))].$$

Thus we obtained that the value $\max_{y\in \overline{\Sigma}_A}(\gamma(y))$ of the support function of the polytope $\overline{\Sigma}_A$ on the covector $\gamma$ is exactly $\area \overline{P}_{\gamma},$ which concludes the proof of the proposition.
\end{proof}

\begin{rem}
The polytope $\overline{\Sigma}_A\subset\R^A$ satisfies only one of the two  properties: its support function attains the desired values on covectors with non-negative entries. This property does not uniquely define the polytope. If, however, the polytope is homogeneous, then its support function is uniquely defined by its values on covectors with non-negative coordinates. 
\end{rem}

\begin{proof}[Proof of Lemma \ref{lemma:iterated}]
Denote by $\Delta_{\gamma}$ and $P_{\gamma}$ the images of $\Omega_A$ and $\mathcal S_{A\cup 0}$ under the projections $\varphi_{\gamma}.$ 

The covector $\gamma$ defines a subdivision $\{a_1=w_0<\ldots<w_k=a_n\}$ of the interval $\conv A.$ 

Both polygons $\overline{P}_{\gamma}$ and $P_{\gamma}$ lie below the piecewise linear curve formed by the set of their common edges, which are fiber intervals of the facets $$\conv(\{(w_j,0,\gamma(w_j)),(w_{j+1},0,\gamma(w_{j+1})), (0,1,0)\})\subset\Delta_{\gamma}\cap\overline{\Delta}_{\gamma},$$ 
which implies the following observation.

Consider the projection $p\colon\R^2\to\R$ forgetting the second coordinate, and for each $\xi\in\R\langle e_2\rangle,$ take the intervals $J_{\xi}=p^{-1}(\xi)\cap P_{\gamma}$ and $\overline{J}_{\xi}=p^{-1}(\xi)\cap \overline{P_{\gamma}}.$ One can easily see that we have the inclusion $J_{\xi}\subset\overline{J}_{\xi},$ moreover, for every $\xi\in\R$ such that $J_{\xi}\neq\emptyset,$ we have $$\max_{(\xi,\tau)\in J_{\xi}}\tau=\max_{(\xi,\tau)\in\overline{J}_{\xi}}\tau.$$

The latter implies that the Minkowski interval $I_{\gamma}=\int \pi_2|_{P_{\gamma}}$ is contained in the interval $\overline{I}_{\gamma}=\int \pi_2|_{\overline{P}_{\gamma}}=[0,\area(\overline{P}_{\gamma})]$ and  that the right endpoints of these two intervals coincide. 

Finally, since taking the Minkowski integral with respect to $\pi_2$ (or $\rho_2$) commutes with the projections $\varphi_{\gamma},$ the interval $I_{\gamma}$ is the image of the polytope $\Sigma_A$ under the projection $\varphi_{\gamma}.$ Therefore, we have $$I_{\gamma}=[\min_{y\in \Sigma_A}(\gamma(y)),\max_{y\in \Sigma_A}(\gamma(y))]=[\min_{y\in \Sigma_A}(\gamma(y)),\area(\overline{P}_{\gamma})].$$

Thus we obtained that the value $\max_{y\in \Sigma_A}(\gamma(y))$ of the support function of the polytope $\Sigma_A$ on the covector $\gamma$ is exactly $\area \overline{P}_{\gamma},$ which concludes the proof of the lemma.
\end{proof}
Combining Lemma \ref{lemma:iterated}, Proposition \ref{prop:non_negative} and Theorem \ref{theor:Morse_support}, we obtain formula (\ref{eq:Maxwell_support}) below. 
\begin{theor}\label{theor:np_of_ms}
Up to a global linear summand, the support function ${\mathfrak m}_A$ of the Newton polytope of the Maxwell stratum can be computed via the following formula: 
\begin{equation}\label{eq:Maxwell_support}
2\cdot\mathfrak m_A(\gamma)=\Big[\int\rho_2|_{\int\rho_1|_{\Omega_A}}\Big](\gamma)+\mathscr B_{-\gcd}(\gamma)-4[S_A](\gamma),
\end{equation}
where $[P](\gamma)$ stands for the value of the support function of a polytope $P$ at the covector $\gamma,$ and $S_A$ is the $A$-secondary polytope.
\end{theor}

\vspace{1ex}

{\sc London Institute for Mathematical Sciences} 

aes@lims.ac.uk

\vspace{1ex}

{\sc Department of Mathematical Sciences, University of Copenhagen, Denmark}  

av@math.ku.dk
\end{document}